\documentclass[10pt]{amsart}

\usepackage{amsthm,amsfonts,amsmath,verbatim,mathrsfs,amssymb,verbatim,cite,fouridx}
\usepackage[usenames]{color}
\usepackage{todonotes}
\usepackage{hyperref}
\usepackage{breakurl}
\usepackage{url}

\newtheorem{theorem}{Theorem}[section]
\newtheorem{lemma}[theorem]{Lemma}
\newtheorem{proposition}[theorem]{Proposition}

\newtheorem{conjecture}[theorem]{Conjecture}
\theoremstyle{definition}

\numberwithin{equation}{section}

\DeclareMathOperator{\eup}{e}
\DeclareMathOperator{\Exp}{Exp}
\DeclareMathOperator{\Ell}{Ell}

\newcommand{\qhyp}[2]{\fourIdx{}{#1}{}{#2}\phi}
\renewcommand{\subseteq}{\subset}

\newcommand{\ip}[2]{\langle#1,#2\rangle}
\newcommand{\la}{\lambda}
\newcommand{\La}{\Lambda}

\newcommand{\IP}{\bar{P}^{\ast}}
\newcommand{\Mn}{\mathcal{M}_n}
\newcommand{\Hg}{\mathcal{H}}
\newcommand{\Hover}{\overline{H}}

\newcommand{\iup}{\textrm{i}}
\renewcommand{\leq}{\leqslant}
\renewcommand{\geq}{\geqslant}

\newcommand{\abs}[1]{\lvert#1\rvert}

\newcommand{\Rat}{\mathbb Q}

\newcommand{\Z}{\mathbb Z}

\newcommand{\Complex}{\mathbb C}
\newcommand{\qbin}[2]{\genfrac{[}{]}{0pt}{}{#1}{#2}}

\begin{document}

\title[A Nekrasov--Okounkov formula]{A Nekrasov--Okounkov formula for Macdonald
polynomials}

\author{Eric M. Rains}
\address{Department of Mathematics, 
California Institute of Technology,
Pasadena, CA 91125, USA}
\email{rains@caltech.edu}

\author{S. Ole Warnaar}
\address{School of Mathematics and Physics,
The University of Queensland, Brisbane, QLD 4072,
Australia}
\email{o.warnaar@maths.uq.edu.au}
\thanks{Work supported by the National Science Foundation 
(grant number DMS-1001645) and the Australian Research Council}

\subjclass[2010]{05A19, 05E05, 05E10, 14J10} 

\begin{abstract}
We prove a Macdonald polynomial analogue of the
celebrated Nekrasov--Okounkov hook-length formula from the
theory of random partitions.
As an application we obtain
a proof of one of the main conjectures of Hausel and Rodriguez-Villegas
from their work on mixed Hodge polynomials 
of the moduli space of stable Higgs bundles on Riemann surfaces.
\end{abstract}

\maketitle

\section{Introduction}

In their paper \emph{Mixed Hodge polynomials of character varieties}
\cite{HRV08}, Hausel and Rodriguez-Villegas study the non-singular
affine variety
\[
\Mn:=\big\{A_1,B_1,\dots,A_g,B_g\in \mathrm{GL}(n,\Complex):~
(A_1,B_1)\cdots(A_g,B_g)=\zeta_n I\big\}/\!/\mathrm{GL}(n,\Complex),
\]
where $g$ is a nonnegative integer, $(A,B)$ is shorthand for the commutator 
$ABA^{-1}B^{-1}$, $\zeta_n$ is a primitive $n$th-root of unity, 
and $/\!/$ is a GIT quotient by the conjugation action of 
$\mathrm{GL}(n,\Complex)$.
$\Mn$, which is the twisted character variety of a closed Riemann
surface $\Sigma$ of genus $g$ with points the twisted homomorphisms from
$\pi_1(\Sigma)$ to $\mathrm{GL}(n,\Complex)$ modulo conjugation, 
has dimension $d_n=2n^2(g-1)+2$ ($g\geq 1$). 
For low values of the rank, $\Mn$ was previously
considered by Hitchin \cite{Hitchin87} ($n=2$) and Gothen \cite{Gothen94} 
($n=3$) in their work on the moduli space of stable 
Higgs bundles of rank $n$ on $\Sigma$. 
The main focus of Hausel and Rodriguez-Villegas is to extend the
computation of the two-variable mixed Hodge polynomial $H(\Mn;q,t)$
by Hitchin and Gothen to arbitrary $n$, and thus to obtain the
Poincar\'e and $E$-polynomials $P(\Mn;t)$ and $E(\Mn;q)$ corresponding
to the one-dimensional subfamilies
\[
P(\Mn;t)=H(\Mn;1,t)
\quad\text{and}\quad
E(\Mn;q)=q^{d_n} H(\Mn;q^{-1},-1).
\]
(For an arbitrary complex algebraic variety $X$ the mixed Hodge polynomial
is defined as the three-variable generating function $H(X;x,y,t)$ of
mixed Hodge numbers $h^{p,q;t}(X)$, but since the cohomology of
$\Mn$ is of type $(p,p)$ \cite[Corollary 4.1.11]{HRV08}, 
$h^{p,q;t}(\Mn)$ vanishes unless unless $p=q$ and one can define
$H(\Mn;q,t):=H(\Mn;q,q,t)$. In \cite[Corollary 2.2.4]{HRV08} it is also
shown that $H(\Mn;q,t)$ does not depend on the choice
of $\zeta_n$ so that $H(\Mn;q,t)$ is indeed well defined.)

Determining $H(\Mn;q,t)$ for arbitrary rank $n$ and genus $g$ is a
very hard problem. The breakthrough observation by 
Hausel and Rodriguez-Villegas is that, conjecturally, the mixed Hodge 
polynomial are related to Macdonald polynomials from the
theory of symmetric functions, resulting in an alternative 
means of computing $H(\Mn;q,t)$ as follows.
Let $\la=(\la_1,\la_2,\dots)$ be an integer partition 
identified as usual with its Young or Ferrers diagram.
For $s$ a square (in the diagram) of $\la$,
the arm-length and leg-length $a(s)=a_{\la}(s)$ and $l(s)=l_{\la}(s)$
are given by the number of boxes to the right, respectively, below $s$. 
That is, if $s$ has coordinates $(i,j)$ then $a(s)=\la_i-j$ and 
$l(s)=\la'_j-i$, where $\la'$ is the conjugate of $\la$.
For example, the arm-length and leg-length of the square $(3,3)$ in the 
partition $(8,7,7,6,4,3,1)$ 

\medskip
\begin{center}
\begin{tikzpicture}[scale=0.3,baseline=0cm,line width=1pt]
\draw[thin] (0,0) rectangle (1,1);
\draw[thin] (0,1) rectangle (1,2);
\draw[thin] (0,2) rectangle (1,3);
\draw[thin] (0,3) rectangle (1,4);
\draw[thin] (0,5) rectangle (1,6);
\draw[thin] (0,6) rectangle (1,7);
\draw[thin] (1,1) rectangle (2,2);
\draw[thin] (1,2) rectangle (2,3);
\draw[thin] (1,3) rectangle (2,4);
\draw[thin] (1,5) rectangle (2,6);
\draw[thin] (1,6) rectangle (2,7);
\draw[thin] (3,2) rectangle (4,3);
\draw[thin] (3,3) rectangle (4,4);
\draw[thin] (3,5) rectangle (4,6);
\draw[thin] (3,6) rectangle (4,7);
\draw[thin] (4,3) rectangle (5,4);
\draw[thin] (4,5) rectangle (5,6);
\draw[thin] (4,6) rectangle (5,7);
\draw[thin] (5,3) rectangle (6,4);
\draw[thin] (5,5) rectangle (6,6);
\draw[thin] (5,6) rectangle (6,7);
\draw[thin] (6,5) rectangle (7,6);
\draw[thin] (6,6) rectangle (7,7);
\draw[thin] (7,6) rectangle (8,7);
\fill[red](2,4) rectangle (3,5);
\begin{scope}[color=pink]
\fill(3,4) rectangle (4,5);
\fill(4,4) rectangle (5,5);
\fill(5,4) rectangle (6,5);
\fill(6,4) rectangle (7,5);
\fill(2,1) rectangle (3,2);
\fill(2,2) rectangle (3,3);
\fill(2,3) rectangle (3,4);
\end{scope}
\draw[thin](2,4) rectangle (3,5);
\draw[thin](0,4) rectangle (1,5);
\draw[thin](1,4) rectangle (2,5);
\draw[thin](3,4) rectangle (4,5);
\draw[thin](4,4) rectangle (5,5);
\draw[thin](5,4) rectangle (6,5);
\draw[thin](6,4) rectangle (7,5);
\draw[thin](2,1) rectangle (3,2);
\draw[thin](2,2) rectangle (3,3);
\draw[thin](2,3) rectangle (3,4);
\draw[thin](2,5) rectangle (3,6);
\draw[thin](2,6) rectangle (3,7);
\end{tikzpicture}
\end{center}

\noindent
are $4$ and $3$ respectively. Hausel and Rodriguez-Villegas define the
genus-$g$ hook function $\mathcal{H}_{\la}(z,w)$ as
\[
\Hg_{\la}(z,w)=
\prod_{s\in\la} \frac{(z^{2a(s)+1}-w^{2l(s)+1})^{2g}}
{(z^{2a(s)+2}-w^{2l(s)})(z^{2a(s)}-w^{2l(s)+2})},
\]
and use this to define two further families of rational functions 
$\{U_n(z,w)\}_{n\geq 1}$ and $\{\Hover_n(z,w)\}_{n\geq 1}$ by
\[
\sum_{\la} \mathcal{H}_{\la}(z,w) T^{\abs{\la}} = 
\exp\bigg(\sum_{n\geq 1} U_n(z,w)\frac{T^n}{n}\bigg),
\]
where $\abs{\la}=\la_1+\la_2+\cdots$ is the size of the partition $\la$, 
and
\begin{equation}\label{Eq_Hover}
\Hover_n(z,w):=\frac{1}{n} (z^2-1)(1-w^2) 
\sum_{d\mid n} \mu(d) U_{n/d}(z^d,w^d),
\end{equation}
with $\mu$ the M\"obius function.\footnote{Alternatively, 
$\Hover_n(z,w)$ may be defined by
\[
\sum_{\la} \mathcal{H}_{\la}(z,w) T^{\abs{\la}}= 
\Exp\bigg(\sum_{n\geq 1} \frac{\Hover_n(z,w) T^n}{(z^2-1)(1-w^2)}\bigg),
\] 
where $\Exp$ is a plethystic exponential \cite{Cadogan71,Getzler95}, 
defined for formal power series 
$f(z,w;T):=\sum_{n\geq 1} c_n(z,w) T^n$ as 
$\Exp\big(f(z,w;T)\big):=\exp\big(\sum_{r\geq 1} f(z^r,w^r;T^r)/r\big)$.}

\begin{conjecture}[{\!\!\cite[Conjecture 4.2.1]{HRV08}}]\label{Con_main}
The mixed Hodge polynomial of $\Mn$ is given by
\begin{equation}\label{Eq_HHbar}
H(\Mn;q,t)=\big(t q^{1/2}\big)^{d_n}\, \Hover_n
\big( q^{1/2},-t^{-1} q^{-1/2}\big).
\end{equation}
\end{conjecture}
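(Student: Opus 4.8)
The plan is to deduce Conjecture~\ref{Con_main} from a Macdonald-polynomial analogue of the Nekrasov--Okounkov hook-length formula, proved in the body of the paper, combined with the arithmetic machinery of \cite{HRV08}. The point of departure is that the hook function $\Hg_\lambda$ is, after an innocuous change of variables, an object of Macdonald theory: if the two Macdonald parameters are specialised to $z^{2}$ and $w^{-2}$, then for each cell $s\in\lambda$ the denominator $(z^{2a(s)+2}-w^{2l(s)})(z^{2a(s)}-w^{2l(s)+2})$ equals, up to a monomial in $z,w$, the standard Macdonald norm factor of $s$, while $(z^{2a(s)+1}-w^{2l(s)+1})^{2g}$ is a $2g$-th power of an arm/leg ``hook-content'' factor evaluated at parameter $1$. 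Consequently the Frobenius-formula evaluation of $\abs{\Mn(\F_q)}$ in \cite{HRV08}, via Green's parametrisation of $\mathrm{Irr}\,\mathrm{GL}_n(\F_q)$, writes $\sum_{\lambda}\Hg_\lambda(z,w)T^{\abs{\lambda}}$ — and hence, through the plethystic-logarithm/M\"obius transform \eqref{Eq_Hover}, the rational function $\Hover_n$ — as a generating function of sums over partitions of exactly the kind a Nekrasov--Okounkov evaluation is built to handle. Granting such an evaluation, proving Conjecture~\ref{Con_main} reduces to checking that the resulting expression for $\Hover_n(z,w)$ has the shape demanded of a mixed Hodge polynomial: it is a polynomial in $z,w$ with non-negative integer coefficients, its $t=-1$ (i.e.\ $w=1/z$) specialisation returns the $E$-polynomial of $\Mn$ already computed in \cite{HRV08}, its total degree is $d_n=2n^2(g-1)+2$, and it obeys the functional equation forced by Poincar\'e duality; together with the structural results on $H^{\ast}(\Mn)$ from \cite{HRV08} — notably that it is of type $(p,p)$ \cite[Cor.~4.1.11]{HRV08} and independent of $\zeta_n$ \cite[Cor.~2.2.4]{HRV08} — these data determine $H(\Mn;q,t)$.

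The heart of the matter is therefore an identity of the schematic shape
\[
\sum_{\lambda} T^{\abs{\lambda}}\,P_\lambda(x)
\prod_{s\in\lambda}
\frac{\prod_{k=1}^{m}\bigl(z^{2a(s)+1}-u_k\,w^{2l(s)+1}\bigr)}
{\bigl(z^{2a(s)+2}-w^{2l(s)}\bigr)\bigl(z^{2a(s)}-w^{2l(s)+2}\bigr)}
=\Phi\bigl(T;x;z,w;u_1,\dots,u_m\bigr),
\]
in which $P_\lambda(x)$ is the Macdonald polynomial in an auxiliary alphabet $x$ with parameters $z^{2},w^{-2}$, and $\Phi$ is an explicitly computable closed form (a Cauchy-type infinite product in the simplest cases, and in general an explicit expression built from such); taking $m=2g$, all $u_k=1$, and specialising $x$ suitably returns $\sum_{\lambda}\Hg_\lambda(z,w)T^{\abs{\lambda}}$, and the classical Nekrasov--Okounkov hook-length formula is a degenerate case. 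I would prove such an identity along the lines developed for $BC$-type hypergeometric series: realise the left-hand side as a limiting case of a Cauchy-type identity for elliptic interpolation functions, with the $m$ numerator factors coming from the ``top row'' of parameters and the two denominator factors from the Macdonald norm, so that the evaluation follows from a residue computation in the degeneration $p\to 0$ together with a further limit. Failing that, one can induct on the number of variables of $x$ using the $q,t$-Pieri rule and the principal-specialisation formula for $P_\lambda$, which reduces the claim to the equality of two explicit rational functions of the $u_k$ of bounded degree, checked at sufficiently many points.

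With the identity in hand, the deduction of \eqref{Eq_HHbar} is bookkeeping with the plethystic exponential of \eqref{Eq_Hover}. The genus-$g$ generating function $\sum_{\lambda}\Hg_\lambda(z,w)T^{\abs{\lambda}}$ is assembled, as in \cite{HRV08}, from $2g$ copies of the building block furnished by the identity (one for each generator $A_i,B_i$); one takes the plethystic logarithm, applies M\"obius inversion, and reads off $\Hover_n(z,w)$. Two points then require verification: first, that the apparent denominators produced by the $\mu$-sum cancel and the prefactor $(z^2-1)(1-w^2)$ in \eqref{Eq_Hover} is absorbed, so that $\Hover_n$ is genuinely a polynomial — this should follow formally from the explicit form of $\Phi$, the logarithm telescoping the denominators; and second, that the coefficients are non-negative integers, which we expect to be visible from that same explicit form. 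Tracking the degree shift then identifies the total degree as $d_n$, matches $(tq^{1/2})^{d_n}\Hover_n(q^{1/2},-t^{-1}q^{-1/2})$ with $H(\Mn;q,t)$, recovers the $E$-polynomial of $\Mn$ at $t=-1$, and exhibits the Poincar\'e-duality functional equation.

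The main obstacle is, by a wide margin, the Nekrasov--Okounkov formula for Macdonald polynomials itself: isolating the precise multiparameter identity in which the arm length $a(s)$ and the leg length $l(s)$ enter independently — rather than only through the hook length $a(s)+l(s)+1$, as in the classical formula — and in which the $2g$-fold numerator is natural, and then proving it. Everything downstream, including the collapse of the a priori rational function $\Hover_n$ to a non-negative polynomial, we expect to be a formal consequence of that identity.
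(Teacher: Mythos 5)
Your proposal does not establish the statement, and you should be aware that the paper itself does not claim to: Conjecture~\ref{Con_main} is left open there for general genus, the paper proving only its $g=0$ case (via the Kaneko--Macdonald binomial theorem \eqref{Eq_KM}) and its $g=1$ case, where the geometric side $H(\Mn;q,t)=(1+qt)^2$ is known independently, so that \eqref{Eq_HHbar} reduces to the purely combinatorial Conjecture~\ref{Con_HRV}, i.e.\ the $u=(t/q)^{1/2}$ specialisation of Theorem~\ref{Thm_main}. Your plan hinges on a closed-form evaluation of $\sum_{\la}\Hg_{\la}(z,w)T^{\abs{\la}}$ with a $2g$-fold numerator per cell (your ``$m=2g$, all $u_k=1$''). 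No such evaluation is known for $g\geq 2$ and none is supplied: the $q,t$-Nekrasov--Okounkov formula accommodates exactly two numerator factors per cell, hence only $g=1$ after specialisation, and for $g\geq 2$ the generating function is not an infinite product --- indeed the polynomiality and nonnegativity of $\Hover_n(z,-w)$ are presented in the paper as \emph{consequences} of the conjecture, not as facts one could read off ``from the explicit form of $\Phi$''. The appeal to elliptic interpolation functions, or to a Pieri-rule induction reducing to ``two rational functions of bounded degree'', is a heuristic with no construction behind it, and it is precisely the step you yourself flag as the main obstacle.

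Even granting such an identity, your reduction of \eqref{Eq_HHbar} to ``bookkeeping'' fails at the geometric end. The arithmetic machinery of \cite{HRV08} (Frobenius formula, Green's parametrisation of the characters of $\mathrm{GL}_n(\F_q)$) computes point counts of $\Mn$ over finite fields, hence only the $E$-polynomial, i.e.\ the one-variable specialisation $q^{d_n}H(\Mn;q^{-1},-1)$. Knowing the combinatorial expression for $\Hover_n$, the $E$-polynomial, the degree $d_n$, positivity of coefficients, and the curious Poincar\'e duality does not determine the two-variable polynomial $H(\Mn;q,t)$: many distinct polynomials with nonnegative coefficients share a given $t=-1$ specialisation, degree and palindromic functional equation, so your claim that ``these data determine $H(\Mn;q,t)$'' is false. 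Recovering the full mixed Hodge (weight) grading from the point count is exactly the content of Conjecture~\ref{Con_main}; neither the structural facts you cite (cohomology of type $(p,p)$, independence of $\zeta_n$) nor any step of your outline supplies that input, which is why the conjecture for $g\geq 2$ remains open even after Theorem~\ref{Thm_main} is proved.
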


This remarkable conjecture has several further implications. 
Since $H(\Mn;q,t)$ is a polynomial with positive coefficients,
\eqref{Eq_HHbar} implies that the rational function $\Hover_n(z,-w)$ 
also must be a polynomial with nonnegative coefficients. 
In the opposite direction, by $a_{\la}(s)=l_{\la'}(s)$ we have 
$\Hg_{\la}(z,w)=\Hg_{\la'}(w,z)$, which implies the ``curious 
Poincar\'e duality'' \cite[Conjecture 4.2.4]{HRV08}
\[
H(\Mn;q,t)=(qt)^{d_n} H\big(\Mn;q^{-1} t^{-2},t\big).
\]
A non-rigorous, string theoretic derivation of \eqref{Eq_HHbar} in 
the more general case of punctured Riemann surfaces \cite{HLRV11,HLRV13}
has recently been given in \cite{CDDP15}.

In the genus-$0$ case $\Mn$ has a single point for $n=1$ and no points for
higher rank. Hence $H(\Mn;q,t)=\delta_{n,1}$ and, by \eqref{Eq_Hover},
\eqref{Eq_HHbar} and $\sum_{d\mid n} \mu(d)=\delta_{n,1}$, this gives
$\Hover_n(z,w)=\Hover_{n/d}(z^d,w^d)=w^{-2n}/(1-z^{2n})(1-w^{-2n})$. For 
genus $0$ the conjecture is thus equivalent to the combinatorial
identity
\begin{equation}\label{Eq_genus0}
\sum_{\la} \Hg\big(q^{1/2},t^{-1/2}\big) T^{\abs{\la}}=
\prod_{i,j\geq 1}\frac{1}{1-q^{i-1}t^jT},
\end{equation}
which follows immediately as a special case of the Kaneko--Macdonald 
binomial theorem for Macdonald polynomials 
\cite{Kaneko96,Macdonald13}.\footnote{Hausel and Rodriguez-Villegas
prove \eqref{Eq_genus0} differently, using a duality of
Garsia and Haiman \cite{GH96} and the Cauchy identity for Schur functions.}

More interesting is the genus-$1$ case. Then $H(\Mn;q,t)=(1+qt)^2$
for all $n\geq 1$, which by \eqref{Eq_HHbar} implies 
$\Hover_n(z,w)=(z-w)^2$. Solving \eqref{Eq_Hover} for $U_n(z,w)$ leads to
\[
U_n(z,w)=\sum_{k\mid n} \frac{n}{k}\cdot 
\frac{(1-z^kw^{-k})^2}{(1-z^{2k})(1-w^{-2k})}.
\]
Since $\sum_{d\mid n}\mu(d)\sum_{m\mid (n/d)} f(md)=f(1)$, 
Conjecture~\ref{Con_main} for $g=1$ is thus
equivalent to the following combinatorial identity.

\begin{conjecture}[\!\!{\cite[Conjecture 4.3.2]{HRV08}}]\label{Con_HRV}
For $g=1$,
\[
\sum_{\la} \mathcal{H}_{\la}\big(q^{1/2},t^{-1/2}\big) T^{\abs{\la}}=
\prod_{i,j,k\geq 1} 
\frac{(1-q^{i-1/2}t^{j-1/2}T^k)^2}{(1-q^{i-1}t^{j-1}T^k)(1-q^it^jT^k)}.
\]
\end{conjecture}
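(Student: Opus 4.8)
\emph{Reduction to a combinatorial identity.}
Setting $z=q^{1/2}$, $w=t^{-1/2}$ in the genus-$1$ hook function and clearing the negative powers of $t$ cellwise --- multiply the numerator of the factor indexed by $s$ by $t^{2l(s)+1}=(t^{l(s)+1/2})^2$ and its denominator by $t^{l(s)}\cdot t^{l(s)+1}$ --- gives
\[
\Hg_\la\big(q^{1/2},t^{-1/2}\big)=\prod_{s\in\la}
\frac{\big(1-q^{a(s)+1/2}t^{l(s)+1/2}\big)^2}{\big(1-q^{a(s)+1}t^{l(s)}\big)\big(1-q^{a(s)}t^{l(s)+1}\big)} .
\]
Thus Conjecture~\ref{Con_HRV} becomes the identity of formal power series
\[
\sum_\la T^{\abs{\la}}\prod_{s\in\la}
\frac{\big(1-q^{a(s)+1/2}t^{l(s)+1/2}\big)^2}{\big(1-q^{a(s)+1}t^{l(s)}\big)\big(1-q^{a(s)}t^{l(s)+1}\big)}
=\prod_{i,j,k\geq1}
\frac{\big(1-q^{i-1/2}t^{j-1/2}T^k\big)^2}{\big(1-q^{i-1}t^{j-1}T^k\big)\big(1-q^it^jT^k\big)} ,
\]
both sides lying in $\Q(q^{1/2},t^{1/2})[[T]]$; the right-hand side is the plethystic exponential $\Exp\big((1-(qt)^{1/2})^2T/((1-q)(1-t)(1-T))\big)$, so it suffices to prove this as a power-series identity.

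\emph{Embedding in a one-parameter family.}
I would deduce this from the more flexible Nekrasov--Okounkov-type formula for Macdonald polynomials
\[
\sum_\la T^{\abs{\la}}\prod_{s\in\la}
\frac{\big(1-a\,q^{a(s)}t^{l(s)+1}\big)\big(1-a^{-1}q^{a(s)+1}t^{l(s)}\big)}{\big(1-q^{a(s)+1}t^{l(s)}\big)\big(1-q^{a(s)}t^{l(s)+1}\big)}
=\prod_{i,j,k\geq1}
\frac{\big(1-a\,q^{i-1}t^jT^k\big)\big(1-a^{-1}q^it^{j-1}T^k\big)}{\big(1-q^{i-1}t^{j-1}T^k\big)\big(1-q^it^jT^k\big)} ,
\]
valid in $\Q(q,t,a)[[T]]$, whose right-hand side is $\Exp\big(T(1-at)(1-a^{-1}q)/((1-q)(1-t)(1-T))\big)$. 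Putting $a=q^{1/2}t^{-1/2}$ collapses both numerator factors to $1-q^{a(s)+1/2}t^{l(s)+1/2}$ and recovers the previous display; the case $a=1$ is the trivial identity $(1-T)^{-1}=\prod(\cdots)/\prod(\cdots)$, a useful sanity check. The point of the extra parameter is that the two numerator products interpolate between the Macdonald norm quantities $c_\la(q,t)=\prod_s(1-q^{a(s)}t^{l(s)+1})$ and $c'_\la(q,t)=\prod_s(1-q^{a(s)+1}t^{l(s)})$, and this is precisely the regime the interpolation-polynomial machinery addresses.

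\emph{Summation via Macdonald and interpolation polynomials.}
The denominator $c_\la(q,t)c'_\la(q,t)$ equals, up to the factor $t^{2n(\la)}$, the reciprocal of the double principal specialization $P_\la(1,t,t^2,\dots;q,t)\,Q_\la(1,t,t^2,\dots;q,t)$; this is why at $a=1$ (and, after a minor twist, in the genus-$0$ case \eqref{Eq_genus0}) the sum over $\la$ is resolved by Macdonald's Cauchy identity, i.e.\ by the Kaneko--Macdonald binomial theorem \eqref{Eq_KM}. The $a$-deformed arm--leg products $\prod_s(1-a\,q^{a(s)}t^{l(s)+1})$ and $\prod_s(1-a^{-1}q^{a(s)+1}t^{l(s)})$ are, however, not principal specializations of ordinary Macdonald polynomials --- they are exactly the quantities produced by the evaluation and principal-specialization formulas for the ($BC$-type) interpolation Macdonald polynomials $\IP_\la$. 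So the plan is to write the summand on the left as a product of inverse $P$- and $Q$-norms times $\IP$-evaluations carrying the parameter $a$, collapse $\sum_\la T^{\abs{\la}}(\cdots)$ by a Cauchy-type or binomial-type summation for the $\IP_\la$ specialized at geometric progressions (an Okounkov-style binomial formula, or a Cauchy identity for interpolation Macdonald polynomials), just as \eqref{Eq_KM} does for the $a=1$ skeleton, and then match the resulting infinite product with the right-hand side by a telescoping/plethystic computation parallel to the genus-$0$ case.

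\emph{Main obstacle.}
The genus-$0$ identity is immediate because there the hook weight is the \emph{pure} object $t^{\abs{\la}+2n(\la)}/(c_\la(q,t)c'_\la(q,t))$, to which \eqref{Eq_KM} applies verbatim; the real difficulty is the parametrized arm--leg numerator peculiar to positive genus. One must (i) identify the right interpolation-polynomial identity whose specialization reproduces this numerator \emph{together with} $c_\la c'_\la$ in the denominator, with the correct balancing of parameters --- the balancing being exactly the condition that makes the sum collapse to an infinite product rather than to a mere $_2\phi_1$-type series --- and (ii) perform that specialization so that the sum over $\la$ genuinely telescopes. This is where the substantive work lies; granting it, Conjecture~\ref{Con_HRV}, and hence Conjecture~\ref{Con_main} for $g=1$, follows by setting $a=q^{1/2}t^{-1/2}$ and invoking the reductions already recorded in the Introduction.
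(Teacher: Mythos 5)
Your reduction is correct and is exactly how the paper treats this conjecture: rewriting $\Hg_{\la}(q^{1/2},t^{-1/2})$ cellwise is right, and your one-parameter family is precisely the $q,t$-Nekrasov--Okounkov formula \eqref{Eq_qtNO} (with your $a$ equal to $u^{-1}$), whose specialisation $u=(t/q)^{1/2}$ gives Conjecture~\ref{Con_HRV}. (Small slip: at $a=1$ the left side is $\sum_{\la}T^{\abs{\la}}=\prod_{k\geq1}(1-T^k)^{-1}$, not $(1-T)^{-1}$.) But the proof stops where the actual content begins: you do not prove the general identity, you only sketch a hoped-for mechanism (``write the summand as $\IP$-evaluations, collapse by some Cauchy/binomial summation for interpolation polynomials'') and explicitly concede that identifying the right identity and making the sum telescope is ``where the substantive work lies.'' Since Conjecture~\ref{Con_HRV} is deduced in the paper solely as a corollary of Theorem~\ref{Thm_main}, leaving that theorem unproved is a genuine gap, not a detail.

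Moreover, the sketched mechanism is not how the identity is actually established, and as stated it is unlikely to work directly: the $u$-deformed arm--leg numerators are not evaluations of interpolation Macdonald polynomials. In the paper they arise from a sum over a \emph{second} partition: one studies $f_{n,m}$ in \eqref{Eq_fdef}, a double sum over $\la,\mu\subseteq(m^n)$ of products of Macdonald polynomials at the shifted points $q^{\la}t^{\delta_n}$, $t^{\la'}q^{\delta_m}$. Summing over $\mu$ with the dual Cauchy identity \eqref{Eq_dual-Cauchy} produces exactly the $u$-numerator (Proposition~\ref{Prop_fnm}), while interpolation polynomials enter only through Okounkov's binomial theorem \eqref{Eq_Binom}, used to prove Proposition~\ref{Prop_QQPP}, which converts those shifted specialisations into skew Macdonald polynomials at $t^{\rho},q^{\rho}$; the resulting quadruple sum is then evaluated by the four-variable Cauchy-type identity \eqref{Eq_4fold} (Theorem~\ref{Thm_4fold}), itself proved by plethystic substitutions in the skew Cauchy identity. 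None of these ingredients --- the auxiliary two-partition sum, the skew-Cauchy/plethystic evaluation, or the precise role of the binomial theorem --- appears in your outline, and \eqref{Eq_KM} alone (your genus-$0$ ``skeleton'') cannot absorb the $u$-dependent numerator. To complete the argument you would need to supply this machinery or an equivalent route (e.g.\ the vertex-operator or equivariant DMVV approaches mentioned in the paper).
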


In this paper we settle this conjecture 
by proving the following more general combinatorial identity.

\begin{theorem}[$q,t$-Nekrasov--Okounkov formula]\label{Thm_main}
We have
\begin{multline}\label{Eq_qtNO}
\sum_{\la} T^{\abs{\la}}
\prod_{s\in\la}\frac{(1-uq^{a(s)+1}t^{l(s)})(1-u^{-1}q^{a(s)}t^{l(s)+1})}
{(1-q^{a(s)+1}t^{l(s)})(1-q^{a(s)}t^{l(s)+1})} \\
=\prod_{i,j,k\geq 1}
\frac{(1-uq^it^{j-1}T^k)(1-u^{-1}q^{i-1}t^jT^k)}
{(1-q^{i-1}t^{j-1}T^k)(1-q^it^jT^k)}.
\end{multline}
\end{theorem}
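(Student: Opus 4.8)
The plan is to obtain \eqref{Eq_qtNO} by specialising a ``master'' identity for Macdonald polynomials — which I expect to be a $q,t$-Cauchy or Littlewood-type identity carrying two extra parameters — to the principal specialisation in the variables $(1,t,t^2,\dots)$; the shape of the right-hand side already suggests how. First I would reorganise it. Grouping the triple product by $k$ gives $\prod_{k\geq 1}F(T^k)$ with $F(z):=\prod_{i,j\geq 1}\frac{(1-uq^it^{j-1}z)(1-u^{-1}q^{i-1}t^jz)}{(1-q^{i-1}t^{j-1}z)(1-q^it^jz)}$, and on taking logarithms and summing the geometric series in $i$ and $j$ one finds $F(z)=\exp\bigl(\sum_{r\geq 1}\tfrac{z^r}{r}\,\tfrac{(1-(uq)^r)(1-(u^{-1}t)^r)}{(1-q^r)(1-t^r)}\bigr)$, which is exactly the Macdonald Cauchy kernel evaluated at the two ``difference-of-geometric-progressions'' specialisations $\epsilon_v\colon p_r\mapsto(1-v^r)/(1-t^r)$, namely $v=uq$ and (after scaling by $z$) $v=u^{-1}t$. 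Using the classical evaluations $P_\mu(\epsilon_v;q,t)=t^{n(\mu)}(v;q,t)_\mu/c'_\mu$ and $Q_\mu(\epsilon_v;q,t)=t^{n(\mu)}(v;q,t)_\mu/c_\mu$ — where $n(\mu)=\sum_i(i-1)\mu_i$, $\;(v;q,t)_\mu=\prod_{s\in\mu}(1-vq^{a'(s)}t^{-l'(s)})$ with $a'(s)=j-1$, $l'(s)=i-1$ for $s=(i,j)$, and $c_\mu=\prod_s(1-q^{a(s)+1}t^{l(s)})$, $c'_\mu=\prod_s(1-q^{a(s)}t^{l(s)+1})$ are the hook products in the denominator of \eqref{Eq_qtNO} — the Cauchy identity $\sum_\mu P_\mu(x)Q_\mu(y)=\prod_{i,j}(tx_iy_j;q)_\infty/(x_iy_j;q)_\infty$ gives
\[
F(z)=\sum_\mu z^{|\mu|}\,\frac{t^{2n(\mu)}(uq;q,t)_\mu\,(u^{-1}t;q,t)_\mu}{c_\mu\,c'_\mu},
\]
a two-parameter relative of the Kaneko--Macdonald binomial theorem that underlies the genus-$0$ identity \eqref{Eq_genus0}. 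Thus \eqref{Eq_qtNO} is equivalent to the factorisation
\[
\sum_\lambda T^{|\lambda|}\prod_{s\in\lambda}\frac{(1-uq^{a(s)+1}t^{l(s)})(1-u^{-1}q^{a(s)}t^{l(s)+1})}{(1-q^{a(s)+1}t^{l(s)})(1-q^{a(s)}t^{l(s)+1})}=\prod_{k\geq 1}F(T^k).
\]

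The content now lies in this factorisation of the weighted partition generating function, and the obstacle is that its summand features the arm--leg products $\prod_s(1-uq^{a(s)+1}t^{l(s)})$, whereas principal specialisations of ordinary Macdonald $P$'s and $Q$'s only ever yield co-arm/co-leg products: arm--leg evaluations carrying a free parameter are the hallmark of the interpolation (shifted) Macdonald polynomials of Knop--Sahi and Okounkov and of the principal evaluations of Koornwinder/$BC_n$ polynomials. I would therefore look to realise the left-hand side of \eqref{Eq_qtNO} as the principal specialisation of a bilinear summation identity for interpolation Macdonald polynomials — of the sort developed by Okounkov and by Rains in the theory of $BC_n$-symmetric functions and elliptic Littlewood identities — taking the two variable sets to $(1,t,t^2,\dots)$ and the two auxiliary parameters to $u$ and $u^{-1}$, and then verifying that the sum collapses to $\prod_kF(T^k)$, the ``separation'' $k$ between the particle- and hole-type arguments of the evaluation accounting for the factor $F(T^k)$ in just the way the hook length produces $\prod_{k\geq 1}(1-z^k)^{\kappa-1}$ in the classical Nekrasov--Okounkov formula. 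A second, more self-contained route — mirroring the free-fermion proof of the Schur case — is to encode $\lambda$ by its boundary lattice path, write $q^{a(s)+1}t^{l(s)}$ and $q^{a(s)}t^{l(s)+1}$ in terms of the positions of the horizontal and vertical steps attached to $s$ so that $\prod_s(\cdots)$ factorises over step pairs and $\sum_\lambda$ becomes a Cauchy-determinant-type sum, equivalently to compute $\sum_\lambda T^{|\lambda|}\prod_s(\cdots)$ as a trace over a $q,t$-deformed Fock space whose mode-by-mode evaluation manifestly produces $\prod_kF(T^k)$.

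The main difficulty, absorbing nearly all of the work, is precisely this last step: for $q\neq t$ the per-cell weights do not linearise into the free-fermion/free-boson computation that trivialises the case $q=t$, so the factorisation has to be extracted from genuinely Macdonald-theoretic input — the evaluation and extra-vanishing properties of interpolation polynomials, or a Pieri/difference-operator recursion, or a Selberg-type integral evaluation — after which one must track the shuffling of powers of $q$ and $t$ between the two sides and justify the (convergent but delicate) rearrangements of the infinite products. Once \eqref{Eq_qtNO} is established, Conjecture~\ref{Con_HRV} is the case $u=q^{-1/2}t^{1/2}$, where both sides acquire the squares $\prod_{i,j,k}(1-q^{i-1/2}t^{j-1/2}T^k)^2$ and $\prod_{s}(1-q^{a(s)+1/2}t^{l(s)+1/2})^2$, and Conjecture~\ref{Con_main} in genus $1$ then follows through \eqref{Eq_Hover}--\eqref{Eq_HHbar}.
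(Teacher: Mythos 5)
Your opening reduction is correct: rewriting the triple product as $\prod_{k\geq 1}F(T^k)$, computing $F$ via power sums, and recognising $F(z)$ as a principally/plethystically specialised Cauchy kernel, so that
\begin{equation*}
F(z)=\sum_{\mu} z^{\abs{\mu}}\,
\frac{t^{2n(\mu)}\,(uq;q,t)_{\mu}\,(u^{-1}t;q,t)_{\mu}}
{c_{\mu}(q,t)\,c'_{\mu}(q,t)}
=\frac{(uqz,u^{-1}tz;q,t)_{\infty}}{(z,qtz;q,t)_{\infty}},
\end{equation*}
is a valid (and in this paper known, cf.\ \eqref{Eq_same}) identity. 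But this only repackages the product side of \eqref{Eq_qtNO}; none of the arm--leg structure on the sum side has been touched. The entire content of the theorem is the factorisation you then state, and at that point your argument stops being a proof: you offer two candidate strategies (a ``bilinear summation identity for interpolation Macdonald polynomials'' specialised at $(1,t,t^2,\dots)$ with parameters $u,u^{-1}$, or a trace over a $q,t$-deformed Fock space), state neither of them precisely, and explicitly concede that for $q\neq t$ the per-cell weights do not linearise and that ``nearly all of the work'' lies in extracting the factorisation from unspecified Macdonald-theoretic input. That missing step is exactly what the paper supplies, and it is not a routine specialisation of an existing bilinear identity: one needs (i) Proposition~\ref{Prop_QQPP}, obtained from Okounkov's binomial theorem for interpolation polynomials, which converts products of the form $P_{\mu}(t^{\rho})P_{\la}(q^{-\mu}t^{\rho})$ into sums of principally specialised skew $Q$'s; (ii) the auxiliary double sum $f_{n,m}$ over pairs $\la,\mu\subseteq(m^n)$, where the parameter $u$ enters through complementation \eqref{Eq_xinv} and Macdonald--Koornwinder duality \eqref{Eq_Tom}, and where summing over $\mu$ by the dual Cauchy identity produces precisely the arm--leg weights; and (iii) the new four-fold Cauchy-type identity \eqref{Eq_4fold-b}, proved by plethystic manipulation of the skew Cauchy identity, which evaluates the resulting quadruple sum in closed form. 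Nothing in your sketch constructs a mechanism playing the role of (ii)--(iii), so the hook-product sum is never actually summed.

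Your intuition that interpolation (shifted) Macdonald polynomials must enter because free parameters attached to arms and legs do not arise from ordinary principal specialisations is sound --- they do enter the paper's proof, but only through the binomial theorem \eqref{Eq_Binom} en route to Proposition~\ref{Prop_QQPP}, not as a directly specialised bilinear identity; and the fermionic route you mention is, as you yourself note, not available at the Macdonald level. The concluding reduction of Conjecture~\ref{Con_HRV} (at $u=(t/q)^{1/2}$) and of the genus-one case of Conjecture~\ref{Con_main} is fine, but it rests on the unproven factorisation. As it stands the proposal is a correct reformulation plus a research plan, not a proof.
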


For $u=(t/q)^{1/2}$ this is Conjecture~\ref{Con_HRV} and for general $u$ 
it is a $q,t$-analogue of the Nekrasov--Okounkov formula discovered by 
Nekrasov and Okounkov in their work on random partitions and Seiberg--Witten 
theory \cite{NO06}.
Indeed, if $h(s):=a(s)+l(s)+1$ is the hook-length of $s$ and 
$\mathscr{H}(\la):=\{h(s):s\in\la\}$ is the multiset of hook-lengths of
$\la$, then \eqref{Eq_qtNO} for $t=q$ simplifies to
\[
\sum_{\la} T^{\abs{\la}}
\prod_{h\in\mathscr{H}(\la)}\frac{(1-uq^h)(1-u^{-1}q^h)}{(1-q^h)^2}
=\prod_{k,r\geq 1} \frac{(1-uq^rT^k)^r(1-u^{-1}q^rT^k)^r}
{(1-q^{r-1}T^k)^r(1-q^{r+1}T^k)^r},
\]
first found in \cite[p.~749]{INRS12} and \cite[Theorem 5]{DH11}. 
Setting $u=q^z$ and letting $q$ tend to $1$ this yields
the Nekrasov--Okounkov formula \cite[Equation (6.12)]{NO06}
(see also \cite[Corollary 1.9]{Han10}, \cite{Westbury06})
\begin{equation}\label{Eq_NO}
\sum_{\la} T^{\abs{\la}}
\prod_{h\in\mathscr{H}(\la)}\Big(1-\frac{z^2}{h^2}\Big)=
\prod_{k\geq 1}(1-T^k)^{z^2-1}.
\end{equation}
In \cite[Proposition 6.1]{Westbury06} Westbury has shown that
for fixed $\la$ and $p$ a sufficiently large integer ($p>\abs{\la}$ suffices)
\[
\prod_{h\in\mathscr{H}(\la)}\Big(\frac{p^2}{h^2}-1\Big)
\]
is the dimension of the irreducible $\mathfrak{sl}(p,\Complex)$-module
indexed by the partition 
\[
\mu:=(\la_1,\dots,\la_p)+(\la'_1-\la'_p,\dots,\la'_1-\la'_2,0).
\]
Using the $q$-analogue of Weyl's dimension formula for 
$\mathfrak{sl}(p,\Complex)$ \cite[p.~124]{Littlewood50}
(see also \cite[Lemma 3.1]{BKW16}) 
or Stanley's hook content formula \cite[Theorem 15.3]{Stanley71}
(see also \cite[Lemma 7.21.2]{Stanley99})
it is not hard to show that in the $q$-case
\[
\prod_{h\in\mathscr{H}(\la)}\frac{(1-q^{h+p})(1-q^{h-p})}{(1-q^h)^2}=
(-1)^{\abs{\la}} q^{-l(\la)\binom{p}{2}} s_{\mu}(1,q,\dots,q^p),
\]
where $l(\la)$ is the length of $\la$ (the number of non-zero $\la_i$)
and $s_{\mu}$ a Schur function.
We did not find a similar such interpretation of the product in 
\eqref{Eq_qtNO} in terms of Macdonald polynomials.

\medskip

The remainder of this paper is organised as follows.
In the next section we first review some basic material from the theory
of Macdonald polynomials and interpolation Macdonald polynomials.
Then we apply these polynomials to prove a number of key identities needed 
in our proof of Theorem~\ref{Thm_main}. This includes the following 
elegant Cauchy-like identity for principally specialised skew Macdonald 
polynomials.
\begin{theorem}\label{Thm_Cauchy-like}
Let $\rho:=(0,1,2,\dots)$ and $q^{\rho}:=(1,q,q^2,\dots)$.
Then
\begin{multline}\label{Eq_4fold}
\sum_{\lambda,\mu,\nu,\tau}
a^{\abs{\lambda}} b^{\abs{\mu}} c^{\abs{\nu}} d^{\abs{\tau}}
b_{\nu}(q,t) b_{\tau}(t,q)
Q_{\lambda/\nu}(t^{\rho};q,t)
Q_{\lambda'/\tau}(q^{\rho};t,q) \\ \times
Q_{\mu/\nu}(t^{\rho};q;t)
Q_{\mu'/\tau}(q^{\rho};t,q) 
=\frac{1}{(abcd;abcd)_{\infty}}\cdot
\frac{(-a,-b;q,t,abcd)_{\infty}}
{(abc,abd;q,t,abcd)_{\infty}},
\end{multline}
\end{theorem}
In the above,
$b_{\la}(q,t)$ is Macdonald's $q,t$-hook function
\[
b_{\la}(q,t):=\prod_{s\in\la} \frac{1-q^{a(s)} t^{l(s)+1}}
{1-q^{a(s)+1} t^{l(s)}}
\]
and
\begin{align*}
(a;q_1,q_2,\dots,q_m)_{\infty}&:=
\prod_{i_1,\dots,i_m\geq 0}(1-a q_1^{i_1}\cdots q_m^{i_m}), \\
(a_1,\dots,a_k;q_1,q_2,\dots,q_m)_{\infty}&:=(a_1;q_1,q_2,\dots,q_m)_{\infty}
\cdots(a_k;q_1,q_2,\dots,q_m)_{\infty}
\end{align*}
are generalised $q$-shifted factorials.
In Section~\ref{Sec_fmn} we study a function $f_{n,m}$ which may 
be viewed as a rational function analogue of 
\begin{equation}\label{Eq_rat-limit}
f(u,T;q,t):=
(uq;q,t)_{\infty} 
\sum_{\la} T^{\abs{\la}}
\prod_{s\in\la}\frac{(1-uq^{a(s)+1}t^{l(s)})(1-u^{-1}q^{a(s)}t^{l(s)+1})}
{(1-q^{a(s)+1}t^{l(s)})(1-q^{a(s)}t^{l(s)+1})},
\end{equation}
see Proposition~\ref{Prop_fnm}.
We determine a number of hidden symmetries of $f_{n,m}$, 
conjecture its polynomiality, and show that up to the factor
$(uq;q,t)_{\infty}$ the limit $\lim_{n,m\to\infty} f_{n,m}$ 
is given by the product side
of \eqref{Eq_qtNO}, thus proving Theorem~\ref{Thm_main}.
Then, in Section~\ref{Sec_cases} we discuss a number of
special cases of the $q,t$-Nekrasov--Okounkov formula,
as well as a close link between our work and that of 
Iqbal, Koz\c{c}az and Shabbir \cite{IKS10}
on the topological vertex formalism.
Finally, in the appendix we give an alternative proof of the
$q,t$-Nekrasov--Okounkov formula, suggested to us by Jim Bryan, 
which is based on Waelder's equivariant Dijkgraaf--Moore--Verlinde--Verlinde 
(DMVV) formula for the Hilbert scheme of points in the plane \cite{Waelder08}.

\subsection*{Acknowledgement}
The second author is grateful to Masoud Kamgarpour for 
pointing out the papers of Hausel and Rodriguez-Villegas \cite{HRV08},
and Hausel, Letellier and Rodriguez-Villegas \cite{HLRV11,HLRV13}
on mixed Hodge polynomials, and to Dennis Stanton for helpful
discussions on $p$-core partitions.
We thank Fernando Rodriguez-Villegas for sending us a preliminary 
version of his paper \cite{CRV16} with Carlsson, which contains a different
proof of Conjecture~\ref{Con_main} based on the 
Carlsson--Nekrasov--Okounkov vertex operator \cite{CNO14}.
We also thank Amer Iqbal for alerting us to the connection 
between our work and \cite{IKS10} and Jim Bryan for explaining the
work of Waelder, which implies the elliptic Nekrasov--Okounkov formula 
described in the appendix.
We thank the two referees for their helpful comments and corrections.

\section{Macdonald polynomials}

\subsection{Partitions}

A partition $\la=(\la_1,\la_2,\dots)$ is a weakly-decreasing sequence of 
nonnegative integers such that only finitely-many $\la_i$ are non-zero.
The positive $\la_i$ are called the parts of $\la$ and the number of parts,
denoted $l(\la)$, is called the length of the partition.
If $\abs{\la}:=\la_1+\la_2+\cdots=n$ we say that $\la$ is a partition of $n$, 
and denote this by $\la\vdash n$. As is customary,
the unique partition of $0$ will be denoted by $0$.
We identify a partition $\la$ with its Young diagram
diagram consisting of $l(\la)$ left-aligned rows of squares
with $\la_i$ squares in the $i$th row.
The conjugate of $\la$, denoted $\la'$, is given by reflecting $\la$ in the 
main diagonal $i=j$, i.e., its parts are the columns of $\la$.
If $\mu$ is contained in $\la$, that is, $\mu_i\leq\la_i$ for all $i$ we write
$\mu\subseteq\la$.
We also adopt the standard dominance order on partitions, writing
$\mu\leq\la$ if and only if 
$\mu_1+\cdots+\mu_i\leq\la_1+\cdots+\la_i$ for all $i\geq 1$,
where $\la,\mu$ are partitions such that $\abs{\la}=\abs{\mu}$.
Throughout the paper we repeatedly use
\[
\delta_n:=(n-1,\dots,1,0), \qquad \rho_n:=(0,1,\dots,n-1)
\]
and 
$\rho:=(0,1,2,\dots)$. Of course, if $f(x)$ is a symmetric function, then
$f(t^{\delta_n})=f(t^{\rho_n})$.
Apart from the arm and leg lengths of a partition defined in the introduction,
we also need to arm-colength $a'(s)=a'_{\la}(s)$ 
and leg-colength $l'(s)=l'_{\la}(s)$ of $s\in\la$, given by 
the number of boxes in $\la$ immediately to the left or above $s$, 
respectively.
Equivalently, $a'(s)=j-1$ and $l'(s)=i-1$ for $s=(i,j)$.
Finally we recall the following standard statistic on partitions 
\cite{Macdonald95}:
\[
n(\la):=\sum_{s\in\la} l'(s)=
\sum_{i\geq 1} (i-1)\la_i=\sum_{i\geq 1} \binom{\la'_i}{2}.
\]

\subsection{Hook functions}

In the introduction we already defined the hook functions
$\Hg_{\la}(z,w)$ and $b_{\la}(q,t)$. We will further need
\begin{align*}
c_{\la}(q,t)&:=\prod_{s\in\la} \big(1-q^{a(s)} t^{l(s)+1}\big) \\
c'_{\la}(q,t)&:=\prod_{s\in\la} \big(1-q^{a(s)+1} t^{l(s)}\big),
\end{align*}
so that 
\[
b_{\la}(q,t)=\frac{c_{\la}(q,t)}{c'_{\la}(q,t)}
\]
and
\begin{align}\label{Eq_qfac}
(z;q,t)_{\la}&:=\prod_{s\in\la}\big(1-zq^{a'(s)}t^{-l'(s)}\big) \\
&\hphantom{:}=\prod_{i,j\geq 1} \frac{1-zq^{i-1} t^{j-\la'_i}}{1-zq^{i-1}t^j}=
\prod_{i=1}^n (zt^{1-i};q)_{\la_i}, \notag 
\end{align}
where $(z;q)_n:=(1-z)\cdots(1-zq^{n-1})$ is the usual $q$-shifted factorial.

It is easy to check from the definition that
\begin{equation}\label{Eq_cdual}
c'_{\la'}(q,t)=c_{\la}(t,q),
\end{equation}
and hence
\begin{equation}\label{Eq_bdual}
b_{\la'}(q,t)=\frac{1}{b_{\la}(t,q)}.
\end{equation}
It is also an elementary exercise to verify the relation
\begin{equation}\label{Eq_qtcdual}
c'_{\la}(1/q,1/t)=(-1)^{\abs{\la}} q^{-n(\la')-\abs{\la}}t^{-n(\la)}
c'_{\la}(q,t).
\end{equation}

\subsection{Macdonald polynomials}

Let $F=\Rat(q,t)$ and $\Lambda_{F}$ the ring of symmetric functions
in $x=(x_1,x_2,\dots)$ with coefficients in $F$. Further denote by 
$\Lambda_{n,F}$ the analogous ring over the finite alphabet
$(x_1,\dots,x_n)$.
The Newton power sums $p_{\la}$ and monomial symmetric functions $m_{\la}$
are defined as
\[
p_{\la}(x):=\prod_{i\geq 1} p_{\la_i}(x)
\]
where $p_r(x):=x_1^r+x_2^r+\cdots$ and $p_0:=1$, and
\[
m_{\la}=\sum_{\alpha} x^{\alpha},
\]
where the sum is over distinct permutations $\alpha$
of $\la$ and $x^{\alpha}:=x_1^{\alpha_1} x_2^{\alpha_2}\cdots$.
Both families of symmetric functions are bases for $\Lambda_F$.

Following Macdonald \cite{Macdonald95} we define the $q,t$-Hall scalar 
product on $\La_F$ as
\[
\ip{p_{\la}}{p_{\mu}}_{q,t}:=
\delta_{\la\mu} z_{\la} \prod_{i\geq 1}
\frac{1-q^{\la_i}}{1-t^{\la_i}},
\]
where $z_{\la}:=\prod_{i\geq 1} m_i(\la)! \, i^{m_i(\la)}$
and $m_i(\la):=\la'_i-\la'_{i+1}$.
The Macdonald polynomials $P_{\la}(q,t)=P_{\la}(x;q,t)$
are the unique family of symmetric functions such that 
\cite[p.~322]{Macdonald95}
\[
P_{\la}(q,t)=m_{\la}+\sum_{\mu<\la} u_{\la\mu}(q,t) m_{\mu},
\qquad
u_{\la\mu}\in F
\]
and
\[
\ip{P_{\la}(q,t)}{P_{\mu}(q,t)}_{q,t}=0\qquad \text{if$\quad\la\neq\mu$}.
\]
We also require the skew Macdonald polynomials 
$P_{\la/\mu}(q,t)$ defined by
\[
\ip{P_{\la/\mu}(q,t)}{P_{\nu}(q,t)}_{q,t}=
\ip{P_{\la}(q,t)}{P_{\mu}(q,t) P_{\nu}(q,t)}_{q,t}.
\]
The polynomial $P_{\la/\mu}(q,t)$ vanishes unless $\mu\subseteq\la$.
Moreover, in $\Lambda_{n,F}$, $P_{\la}(q,t)$ vanishes unless $l(\la)\leq n$.

A second family of Macdonald polynomials 
$Q_{\la/\mu}(x;q,t)=Q_{\la/\mu}(q,t)$ may be defined by
\begin{equation}\label{Eq_QP-skew}
Q_{\la/\mu}(q,t)=\frac{b_{\la}(q,t)}{b_{\mu}(q,t)}\, P_{\la/\mu}(q,t).
\end{equation}
Then $\ip{P_{\la}(q,t)}{Q_{\mu}(q,t)}_{q,t}=\delta_{\la\mu}$,
which is equivalent to the Cauchy identity \cite[p.~324]{Macdonald95}
\begin{equation}\label{Eq_Cauchy-noskew}
\sum_{\la} P_{\la}(x;q,t)Q_{\la}(y;q,t)
=\prod_{i,j\geq 1} \frac{(tx_iy_j;q)_{\infty}}{(x_iy_j;q)_{\infty}}.
\end{equation}

For Macdonald polynomials in $n$ variables we need the principal
specialisation formula \cite[p.~337]{Macdonald95}
\begin{equation}\label{Eq_MacPS}
P_{\la}(t^{\delta_n};q,t)=
t^{n(\la)} \prod_{s\in\la}
\frac{1-q^{a'(s)}t^{n-l'(s)}}{1-q^{a(s)}t^{l(s)+1}}=
t^{n(\la)} \frac{(t^n;q,t)_{\la}}{c_{\la}(q,t)}
\end{equation}
and the Macdonald--Koornwinder duality \cite[p.~332]{Macdonald95}
\begin{equation}\label{Eq_Tom}
P_{\la}(t^{\delta_n};q,t) P_{\mu}(q^{\la}t^{\delta_n};q,t)
=P_{\mu}(t^{\delta_n};q,t) P_{\la}(q^{\mu}t^{\delta_n};q,t)
\end{equation}
for $l(\la),l(\mu)\leq n$. Here $q^{\la}t^{\delta_n}:=
(q^{\la_1}t^{n-1},q^{\la_2}t^{n-2},\dots,q^{\la_n}t^0)$.

\medskip

In our proof of \eqref{Eq_4fold} it will be convenient to adopt
plethystic or $\la$-ring notation \cite{Haglund08,Lascoux01}.
In particular, for $f\in\La_F$ we use $f([(a-b)/(1-t)])$,
defined in terms of the power sums as
\begin{equation}\label{Eq_pr-plet}
p_r\Big(\Big[\frac{a-b}{1-t}\Big]\Big):=\frac{a^r-b^r}{1-t^r}.
\end{equation}
The map $\varepsilon_{a,b,t}:\La_F\to F[a,b]$ given by
$\varepsilon_{a,b,t}(f)\mapsto f([(a-b)/(1-t)])$ is a ring homomorphism,
and in particular
\begin{subequations}\label{Eq_split}
\begin{align}
P_{\la/\nu}\Big(\Big[\frac{a-b}{1-t}\Big];q,t\Big)&=
\sum_{\mu} P_{\la/\mu}\Big(\Big[\frac{a}{1-t}\Big];q,t\Big) 
P_{\mu/\nu}\Big(\Big[\frac{-b}{1-t}\Big];q,t\Big) \\&=
\sum_{\mu} P_{\la/\mu}\Big(\Big[\frac{-b}{1-t}\Big];q,t\Big)
P_{\mu/\nu}\Big(\Big[\frac{a}{1-t}\Big];q,t\Big).
\end{align}
\end{subequations}
We also note that
\begin{subequations}
\begin{align}
f\Big(\Big[a\,\frac{1-t^n}{1-t}\Big]\Big)
&=f(at^{\rho_n})=f(at^{\delta_n})\\[2mm]
f\Big(\Big[\frac{a}{1-t}\Big]\Big)&=f(at^{\rho}).
\label{Eq_plet}
\end{align}
\end{subequations}
Let $\omega_{q,t}$ be the automorphism of $\La_F$ defined by
\[
\omega_{q,t}(p_r)=(-1)^{r-1} \frac{1-q^r}{1-t^r}\, p_r.
\]
Then \cite[p.~327]{Macdonald95}
\begin{equation}\label{Eq_omega}
\omega_{q,t}\big(P_{\la/\mu}(q,t)\big)=Q_{\la'/\mu'}(t,q).
\end{equation}
If $f\in\La_F$ is homogeneous of degree $r$
then it is readily checked using \eqref{Eq_pr-plet} and \eqref{Eq_omega} 
that
\[
\varepsilon_{a,b,t}(f)=(-1)^r \varepsilon_{b,a,q}\,\omega_{q,t}(f).
\]
Applying this with $f=P_{\la/\mu}(q,t)$ and using 
\eqref{Eq_omega} implies the duality
\begin{equation}\label{Eq_abba}
P_{\la/\mu}\Big(\Big[\frac{a-b}{1-t}\Big];q,t\Big)
=(-1)^{\abs{\la}-\abs{\mu}}  
Q_{\la'/\mu'}\Big(\Big[\frac{b-a}{1-q}\Big];t,q\Big).
\end{equation}

\subsection{Interpolation Macdonald polynomials}

In this section we work exclusively in $\La_{n,F}$, and assume that
$x=(x_1,\dots,x_n)$ and $\mu$ is a partition of length at most $n$.
Then the interpolation Macdonald polynomial (or shifted Macdonald polynomial)
$\IP_{\mu}=\IP_{\mu}(x;q,t)$
is the unique (inhomogeneous) symmetric polynomial of degree $\abs{\mu}$
in $x$ such that
\begin{equation}\label{Eq_vanishing}
\IP_{\mu}(q^{\la}t^{\delta_n};q,t)=0\quad
\text{for all $\la$ such that $\mu\not\subset\la$} \\
\end{equation}
and
\begin{equation}\label{Eq_normalisation-1}
[x^{\mu}] \IP_{\mu}(x;q,t)=1.
\end{equation}
The polynomials $\IP_{\mu}(x;q,t)$ were first introduced and studied
by Knop, Okounkov and Sahi in 
\cite{Knop97,KS96,Okounkov97,Okounkov98,Sahi96}, and the choice of
defining relations differs slightly from author to author. For example,
in \eqref{Eq_vanishing} the ``for all'' condition is sometimes 
replaced by the weaker ``for all $\la\neq\mu$ such that 
$\abs{\la}\leq\abs{\mu}$''
and the normalisation \eqref{Eq_normalisation-1} is sometimes replaced by
\begin{equation}\label{Eq_normalisation-2}
\IP_{\mu}(q^{\mu} t^{\delta_n};q,t)=(-1)^{\abs{\mu}}q^{n(\mu')}
t^{(n-1)\abs{\mu}-2n(\mu)} c'_{\mu}(q,t).
\end{equation}

Below we have collected a number of results from the theory of 
interpolation Macdonald polynomials needed in our proof of the 
$q,t$-Nekrasov--Okounkov formula.
In \cite[Theorem 1.1]{Sahi96} Sahi showed that
the top-homogeneous degree term of $\IP_{\mu}(x;q,t)$ is the
Macdonald polynomial $P_{\mu}(x;q,t)$. In other words,
\begin{equation}\label{Eq_top-hom}
\lim_{a\to\infty} a^{-\abs{\mu}} \IP_{\mu}(ax;q,t)=P_{\mu}(x;q,t).
\end{equation}
For $\mu$ a partition of length at most $n$, the interpolation Macdonald 
polynomials satisfy the stability property 
\begin{equation}\label{Eq_stable}
\IP_{\mu}(tx_1,\dots,tx_n,1;q,t)=t^{\abs{\mu}} \IP_{\mu}(x_1,\dots,x_n;q,t).
\end{equation}
Okounkov \cite{Okounkov97} used this to define the $q,t$-binomial coefficients
\begin{equation}\label{Eq_qt-binom}
\qbin{\la}{\mu}_{q,t}:=
\frac{\bar{P}^{\ast}_{\mu}(q^{\la}t^{\delta_n};q,t)}
{\bar{P}^{\ast}_{\mu}(q^{\mu}t^{\delta_n};q,t)}.
\end{equation}
Thanks to \eqref{Eq_stable} the left-hand side is independent 
of $n$ as long as we take $n\geq l(\la),l(\mu)$.
It follows from the vanishing property \eqref{Eq_vanishing} that
$\qbin{\la}{\mu}_{q,t}=0$ unless $\mu\subseteq\la$.
From a duality of $\IP_{\mu}(x;q,t)$ given in
\cite[Theorem IV]{Okounkov98} Okounkov inferred the duality
\cite[Equation (2.12)]{Okounkov97}
\begin{equation}\label{Eq_qbinom-duality}
\qbin{\la}{\mu}_{q,t}=\qbin{\la'}{\mu'}_{1/t,1/q}.
\end{equation}
Finally we need the binomial theorem \cite{Okounkov97} for 
interpolation Macdonald polynomials, given by
\begin{equation}\label{Eq_Binom}
\sum_{\nu} a^{\abs{\nu}} \qbin{\la}{\nu}_{1/q,1/t}\,
\frac{\IP_{\la}(at^{-\delta_n};q,t)}{\IP_{\nu}(at^{-\delta_n};q,t)}\,
\IP_{\nu}(x;1/q,1/t)=\IP_{\la}(ax;q,t).
\end{equation}

To conclude this section we apply the binomial theorem to prove the
following sum over the product of two skew Macdonald polynomials.

\begin{proposition}\label{Prop_QQPP}
For $\la$ and $\mu$ partitions,
\begin{multline}\label{Eq_Prop}
\sum_{\nu} q^{-n(\la')-n(\mu')-\abs{\nu}} t^{n(\la)+n(\mu)}
b_{\nu}(t,q) Q_{\la'/\nu}(q^{\rho};t,q) Q_{\mu'/\nu}(q^{\rho};t,q) \\
=P_{\mu}(t^{\rho};q,t)P_{\la}(q^{-\mu} t^{\rho};q,t).
\end{multline}
\end{proposition}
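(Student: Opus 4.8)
The plan is to extract Proposition~\ref{Prop_QQPP} from the binomial theorem \eqref{Eq_Binom} by specialising the alphabet $x$ appropriately and then recognising both sides after a suitable normalisation. First I would set $x = t^{\rho_n}$ (equivalently $t^{\delta_n}$, using $n\geq l(\la)$) and $a = t^{-n}$ or a similar power of $t$ in \eqref{Eq_Binom}, so that the argument $at^{-\delta_n}$ becomes a reversed geometric progression $q^{0}t^{-\rho_n}$-type specialisation; the point is to arrange that $\IP_{\la}(ax;q,t)$ on the right collapses, via the normalisation \eqref{Eq_normalisation-2} together with the vanishing/evaluation formulas, to an explicit product that matches (after clearing the $q,t$-powers) $P_{\mu}(t^{\rho};q,t)P_{\la}(q^{-\mu}t^{\rho};q,t)$. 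The Macdonald--Koornwinder duality \eqref{Eq_Tom} will be the key tool for rewriting products like $P_{\la}(t^{\delta_n})P_{\mu}(q^{\la}t^{\delta_n})$ into the symmetric form appearing on the right-hand side of \eqref{Eq_Prop}.

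Next I would convert the left side of \eqref{Eq_Binom}. The ratio $\IP_{\la}(at^{-\delta_n};q,t)/\IP_{\nu}(at^{-\delta_n};q,t)$, combined with the $q,t$-binomial coefficient $\qbin{\la}{\nu}_{1/q,1/t}$ and the evaluation \eqref{Eq_normalisation-2}, should reorganise into $b_{\nu}(t,q)$ times a skew Macdonald polynomial: here I expect to use the relation \eqref{Eq_QP-skew} between $P_{\la/\nu}$ and $Q_{\la/\nu}$, the duality $b_{\la'}(q,t)=1/b_{\la}(t,q)$ from \eqref{Eq_bdual}, the reflection \eqref{Eq_qtcdual} for $c'_{\la}(1/q,1/t)$, and the $\omega_{q,t}$-type conjugation passing from $(q,t)$ to $(1/q,1/t)$ and from $\la$ to $\la'$. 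The factor $q^{-n(\la')-n(\mu')-\abs{\nu}}t^{n(\la)+n(\mu)}$ in the statement is exactly the sort of prefactor that \eqref{Eq_qtcdual} and the principal specialisation \eqref{Eq_MacPS} produce, so I would track those monomials carefully rather than by magic. The appearance of $Q_{\mu'/\nu}$ as a second factor suggests that $\IP_{\nu}(x;1/q,1/t)$ specialised at $x=t^{\rho_n}$ (or $q^{\mu}t^{\delta_n}$) should be identified, via \eqref{Eq_top-hom} or a direct principal-specialisation argument in the $1/q,1/t$ world, with a principally specialised skew Macdonald polynomial $Q_{\mu'/\nu}(q^{\rho};t,q)$; one natural route is to first prove the identity with one of the two $Q$'s replaced by a plain $P_{\nu}$ or $\IP_{\nu}$ evaluation and then symmetrise.

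An alternative, possibly cleaner, route is to avoid choosing a clever specialisation of \eqref{Eq_Binom} directly and instead prove \eqref{Eq_Prop} by pairing: recognise the left-hand side as $\ip{\,\cdot\,}{\,\cdot\,}_{q,t}$ of two generating series of skew Macdonald polynomials and apply the skew Cauchy identity (the skew version of \eqref{Eq_Cauchy-noskew}), using $\sum_{\nu} b_{\nu}(t,q)Q_{\la'/\nu}(q^{\rho};t,q)$-type sums. The duality \eqref{Eq_abba} and the plethystic splitting \eqref{Eq_split} would then let me pass from the $(t,q)$, conjugate-partition side back to $(q,t)$ and ordinary partitions, landing on $P_{\mu}P_{\la}$ at shifted arguments. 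Either way, the main obstacle I anticipate is bookkeeping: getting the precise monomial prefactor $q^{-n(\la')-n(\mu')-\abs{\nu}}t^{n(\la)+n(\mu)}$ and the switch $b_\nu(t,q)$ versus $b_\nu(q,t)$ exactly right, since these are dictated by the normalisation \eqref{Eq_normalisation-2} and the reflection \eqref{Eq_qtcdual}, and a single misplaced power of $q$ or $t$ breaks the identity. The structural content — that this is \eqref{Eq_Binom} in disguise after a reversed principal specialisation and a conjugation — is the easy part; verifying the constants is where the real work lies.
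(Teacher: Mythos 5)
Your structural guess is right in one respect: the paper's proof does start from the binomial theorem \eqref{Eq_Binom}, uses Okounkov's duality \eqref{Eq_qbinom-duality} and the normalisation \eqref{Eq_normalisation-2}, and finishes with a large-$n$ limit. But two essential mechanisms are missing from your proposal, and they are not bookkeeping. First, the way $\mu$ enters: the paper specialises $x=q^{-\mu}t^{-\delta_n}$ in \eqref{Eq_Binom}, so that $\IP_{\nu}(q^{-\mu}t^{-\delta_n};1/q,1/t)$ is an evaluation at an interpolation point of the $(1/q,1/t)$-polynomials and, by the very definition \eqref{Eq_qt-binom}, equals $\qbin{\mu}{\nu}_{1/q,1/t}$ times $\IP_{\nu}(q^{-\nu}t^{-\delta_n};1/q,1/t)$; this is what creates the second, $\mu$-dependent factor in the sum. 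Your primary choice $x=t^{\rho_n}$, $a=t^{-n}$ produces no $\mu$-dependence at all, even though the right-hand side of \eqref{Eq_Prop} depends on $\mu$, and your parenthetical alternative $x=q^{\mu}t^{\delta_n}$ has the wrong signs (it is not an interpolation point for the reciprocal parameters, so no binomial coefficient comes out). Likewise, fixing $a$ at a power of $t$ cannot work: for finite $a$ the evaluations $\IP_{\la}(ax;q,t)$ and the ratio $\IP_{\la}(at^{-\delta_n};q,t)/\IP_{\nu}(at^{-\delta_n};q,t)$ do not collapse to Macdonald polynomials; the paper rescales $a\mapsto at^{n-1}$, multiplies by $a^{-\abs{\la}}$ and lets $a\to\infty$, using \eqref{Eq_top-hom} to produce $P_{\la}(t^{\delta_n};q,t)/P_{\nu}(t^{\delta_n};q,t)$ on the left and $P_{\la}(q^{-\mu}t^{\rho_n};q,t)$ on the right.

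The second, more serious gap is the identity converting binomial coefficients into the skew Macdonald polynomials of \eqref{Eq_Prop}. After \eqref{Eq_qbinom-duality} one needs Lassalle's evaluation \eqref{Eq_Lassalle}, $\qbin{\la}{\mu}_{q,t}=t^{n(\mu)-n(\la)}\,c'_{\la}(q,t)\,Q_{\la/\mu}(t^{\rho};q,t)/c'_{\mu}(q,t)$, which is precisely what produces $Q_{\la'/\nu}(q^{\rho};t,q)$ and $Q_{\mu'/\nu}(q^{\rho};t,q)$ together with the monomial prefactors; the general facts you list (\eqref{Eq_QP-skew}, \eqref{Eq_bdual}, \eqref{Eq_qtcdual}, the $\omega_{q,t}$ involution) cannot manufacture this on their own, and \eqref{Eq_top-hom} certainly does not. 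Two smaller points: the Macdonald--Koornwinder duality \eqref{Eq_Tom} is not used in the paper's proof of this proposition (it enters later, in the study of $f_{n,m}$), and your fallback route via a skew-Cauchy pairing does not work as stated, since \eqref{Eq_Cauchy} evaluates sums over the \emph{outer} partition, whereas \eqref{Eq_Prop} sums over the \emph{inner} partition with both outer shapes fixed and its right-hand side is not a Cauchy-type product — indeed that is why the interpolation machinery is needed in the first place. So the starting point is correct, but as written the proposal lacks the specialisation $x=q^{-\mu}t^{-\delta_n}$, the $a\to\infty$ degeneration, and the key input \eqref{Eq_Lassalle}, each of which is essential to reach \eqref{Eq_prop-finite} and hence \eqref{Eq_Prop}.
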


\begin{proof}
Let $\la$ and $\mu$ be partitions of length at most $n$.
If we specialise $x=q^{-\mu}t^{-\delta_n}$ in the binomial theorem 
\eqref{Eq_Binom} and use definition \eqref{Eq_qt-binom} of the
$q,t$-binomial coefficient we obtain
\begin{multline*}
\sum_{\nu} a^{\abs{\nu}} 
\qbin{\la}{\nu}_{1/q,1/t} \qbin{\mu}{\nu}_{1/q,1/t}\,
\frac{\IP_{\la}(at^{-\delta_n};q,t)}{\IP_{\nu}(at^{-\delta_n};q,t)}\,
\IP_{\nu}(q^{-\nu}t^{-\delta_n};1/q,1/t) \\
=\IP_{\la}(aq^{-\mu}t^{-\delta_n};q,t).
\end{multline*}
By the duality \eqref{Eq_qbinom-duality} we may replace 
$\qbin{\la}{\nu}_{1/q,1/t} \qbin{\mu}{\nu}_{1/q,1/t}$ by 
$\qbin{\la'}{\nu'}_{t,q} \qbin{\mu'}{\nu'}_{t,q}$.
Also replacing $a$ by $at^{n-1}$, then multiplying both sides 
by $a^{-\abs{\la}}$, and finally letting $a$ tend to infinity 
using \eqref{Eq_top-hom} results in
\begin{multline}\label{Eq_byduality}
\sum_{\nu} t^{(n-1)\abs{\nu}}
\qbin{\la'}{\nu'}_{t,q} \qbin{\mu'}{\nu'}_{t,q}\,
\frac{P_{\la}(t^{\delta_n};q,t)}{P_{\nu}(t^{\delta_n};q,t)}\,
\IP_{\nu}(q^{-\nu}t^{-\delta_n};1/q,1/t)\\
=P_{\la}(q^{-\mu}t^{\rho_n};q,t).
\end{multline}
Next we use \cite[p.~323]{Lassalle98} 
(see also \cite[Equation (3.13)]{GHT99})
\begin{equation}\label{Eq_Lassalle}
\qbin{\la}{\mu}_{q,t}=t^{n(\mu)-n(\la)} 
\frac{c'_{\la}(q,t)}{c'_{\mu}(q,t)}\, Q_{\la/\mu}(t^{\rho};q,t)
\end{equation}
as well as the principal specialisation formula \eqref{Eq_MacPS} and 
normalisation formula \eqref{Eq_normalisation-2}.
This allows \eqref{Eq_byduality} to be rewritten as
\begin{multline*}
\sum_{\nu} (-1)^{\abs{\nu}}q^{n(\nu')-n(\la')-n(\mu')}
t^{n(\nu)+n(\la)} \,
\frac{(t^n;q,t)_{\la}}{(t^n;q,t)_{\nu}} \\
\times
\frac{c_{\nu}(q,t)c'_{\nu}(1/q,1/t)c'_{\la'}(t,q)c'_{\mu'}(t,q)}
{c_{\la}(q,t)(c'_{\nu'}(t,q))^2}\, 
Q_{\la'/\nu'}(q^{\rho};t,q) Q_{\mu'/\nu'}(q^{\rho};t,q) \\
=P_{\la}(q^{-\mu}t^{\rho_n};q,t).
\end{multline*}
Simplifying this using \eqref{Eq_cdual}--\eqref{Eq_qtcdual} yields
\begin{multline*}
\sum_{\nu} q^{-n(\la')-n(\mu')-\abs{\nu}} t^{n(\la)} \,
\frac{(t^n;q,t)_{\la}}{(t^n;q,t)_{\nu}} \\
\times
\frac{c_{\mu}(q,t)}{b_{\nu}(q,t)}\, 
Q_{\la'/\nu'}(q^{\rho};t,q) Q_{\mu'/\nu'}(q^{\rho};t,q)
=P_{\la}(q^{-\mu}t^{\rho_n};q,t).
\end{multline*}
Multiplying both sides by $P_{\mu}(t^{\rho_n};q,t)=P_{\mu}(t^{\delta_n};q,t)$
and once again using the principal specialisation formula \eqref{Eq_MacPS},
we finally arrive at
\begin{multline}\label{Eq_prop-finite}
\sum_{\nu} q^{-n(\la')-n(\mu')-\abs{\nu}} t^{n(\la)+n(\mu)} \,
\frac{(t^n;q,t)_{\la}(t^n;q,t)_{\mu}}{(t^n;q,t)_{\nu} \, b_{\nu}(q,t)} \\
\times 
Q_{\la'/\nu'}(q^{\rho};t,q) Q_{\mu'/\nu'}(q^{\rho};t,q)
=P_{\mu}(t^{\rho_n};q,t) P_{\la}(q^{-\mu}t^{\rho_n};q,t).
\end{multline} 
The identity \eqref{Eq_Prop} follows in the large-$n$ limit, up to the
variable change $\nu\mapsto\nu'$ and the use of 
$b_{\nu'}(q,t)b_{\nu}(t,q)=1$, see \eqref{Eq_bdual}.
\end{proof}

\subsection{Proof of Theorem~\ref{Thm_Cauchy-like}}
In this section we establish the Cauchy-like identity \eqref{Eq_4fold}
which will be key in our subsequent proof of Theorem~\ref{Thm_main}.
In fact, we will prove a slightly less-symmetric but equivalent form
obtained by the simultaneous substitution
\[
(a,b,c,d)\mapsto (Tab,cd,1/ac,1/bd).
\]

\begin{theorem}\label{Thm_4fold}
We have
\begin{multline}\label{Eq_4fold-b}
\sum_{\la,\mu,\nu,\tau} T^{\abs{\la}} b_{\nu}(q,t) b_{\tau}(t,q) 
Q_{\la/\nu}(at^{\rho};q,t) Q_{\la'/\tau}(bq^{\rho};t,q) \\ \times
Q_{\mu/\nu}(ct^{\rho};q,t) Q_{\mu'/\tau}(dq^{\rho};t,q) 
=\frac{1}{(T;T)_{\infty}}\cdot \frac{(-abT,-cd;q,t,T)_{\infty}}
{(acT,bdT;q,t,T)_{\infty}}.
\end{multline}
\end{theorem}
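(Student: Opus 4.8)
The plan is to carry out the four partition sums one at a time, exploiting that in \eqref{Eq_4fold-b} the partition $\la$ occurs only through $Q_{\la/\nu}(at^{\rho};q,t)\,Q_{\la'/\tau}(bq^{\rho};t,q)$ and $\mu$ only through its $\mu$-analogue. Hence, with $\nu$ and $\tau$ held fixed, the left-hand side factors as a double sum over $\nu,\tau$ of a product of two ``inner'' sums. I would first evaluate the inner $\la$-sum $\sum_{\la}T^{\abs{\la}}Q_{\la/\nu}(at^{\rho};q,t)Q_{\la'/\tau}(bq^{\rho};t,q)$: use homogeneity to move the scalars $a,b,T$ into the alphabets, and the plethystic dualities \eqref{Eq_omega}, \eqref{Eq_abba} to rewrite $Q_{\la'/\tau}(bq^{\rho};t,q)$ as a $(q,t)$-skew Macdonald polynomial $P_{\la/\tau'}$ at a virtual alphabet of the shape $[\,\cdot/(1-t)\,]$. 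The $\la$-sum then becomes a special case of the skew Cauchy identity for Macdonald polynomials (which itself follows from \eqref{Eq_Cauchy-noskew} together with the comultiplication identity \eqref{Eq_split}).

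Summing over $\la$, the Cauchy kernel — evaluated at the two relevant virtual alphabets — collapses to a single generalised $q$-shifted factorial $(-abT;q,t)_{\infty}$, and what remains is a \emph{finite} sum over a partition $\sigma\subseteq\nu,\tau'$ of $a^{\abs{\nu}+\abs{\tau}-\abs{\sigma}}\,Q_{\nu'/\sigma'}(q^{\rho};t,q)\,Q_{\tau'/\sigma}(t^{\rho};q,t)$ (this uses \eqref{Eq_abba} once more to turn the residual skew polynomials back into the ``$q^{\rho}$ / $t^{\rho}$'' form). The same computation on the $\mu$-sum produces the factorial $(-cd;q,t)_{\infty}$ and an analogous finite sum over $\phi\subseteq\nu,\tau'$. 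In particular the specialisation $c=d=0$ of \eqref{Eq_4fold}, where only $\nu=\tau=0$ contributes, is exactly this step and already yields the base product $(-a,-b;q,t)_{\infty}$.

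Substituting the two evaluations back leaves $(-abT;q,t)_{\infty}(-cd;q,t)_{\infty}$ times a sum over $\nu,\tau,\sigma,\phi$ in which the $\nu$- and $\tau$-dependence (two skew Macdonald polynomials sharing a top index, weighted by $b_{\nu}(q,t)$ resp.\ $b_{\tau}(t,q)$) is precisely of the type evaluated by Proposition~\ref{Prop_QQPP}. Applying Proposition~\ref{Prop_QQPP} to the $\nu$- and $\tau$-sums produces products of principally specialised Macdonald polynomials with arguments such as $q^{-\sigma}t^{\rho}$; the principal specialisation formula \eqref{Eq_MacPS} and the Macdonald--Koornwinder duality \eqref{Eq_Tom} convert these into further $q$-shifted factorials. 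A final reorganisation — in which the surviving sum over $\sigma,\phi$ is seen to regenerate \eqref{Eq_4fold} with all parameters scaled by $abcd$ — supplies the remaining ``$k\ge 2$'' factors of $(-abT,-cd;q,t,T)_{\infty}$, the denominator $(acT,bdT;q,t,T)_{\infty}$ and the partition-counting factor $1/(T;T)_{\infty}$; comparing with the right-hand side of \eqref{Eq_4fold-b} then finishes the argument.

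The step I expect to be the main obstacle is the passage from the post-skew-Cauchy sum to the exact hypotheses of Proposition~\ref{Prop_QQPP}: the naive reduction leaves the outer partitions carrying the summed partitions as \emph{top} indices with a $1/b$-weight, whereas Proposition~\ref{Prop_QQPP} is stated for the \emph{bottom} index with a $b$-weight, so one must invoke the conjugation symmetry \eqref{Eq_qbinom-duality} of the $q,t$-binomial coefficients (via \eqref{Eq_Lassalle}), be scrupulous about which scalar gets absorbed into which alphabet, and verify that the leftover sum genuinely closes up into a lower-degree instance of the identity (this may well require an auxiliary Saalschütz-type summation beyond Proposition~\ref{Prop_QQPP} itself). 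Once that bookkeeping is under control, the remaining work is a long but routine manipulation of generalised $q$-shifted factorials.
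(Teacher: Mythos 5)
Your first reduction is sound: absorbing $T$ by homogeneity and converting $Q_{\la'/\tau}(bq^{\rho};t,q)$ via \eqref{Eq_abba}, each of the $\la$- and $\mu$-sums is a single skew Cauchy summation, producing the factors $(-abT;q,t)_{\infty}$ and $(-cd;q,t)_{\infty}$ together with finite residual sums. The genuine gap comes immediately after, at the step you yourself flag. The residual quadruple sum has $\nu'$ and $\tau'$ as common \emph{top} indices, each appearing in two skew polynomials with \emph{different} specialised alphabets ($at^{\rho}$ versus $ct^{\rho}$, and $bq^{\rho}$ versus $dq^{\rho}$, up to powers of $T$), with the $b$-weights and the leftover $T$-weight attached to those top indices. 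Proposition~\ref{Prop_QQPP} has a different shape altogether: it sums over a common \emph{bottom} index of two skew polynomials both specialised at the same alphabet $q^{\rho}$, and its output is a product of principally specialised non-skew Macdonald polynomials. No conjugation symmetry of $q,t$-binomial coefficients (nor \eqref{Eq_Lassalle}) converts a common-top-index sum into a common-bottom-index one, so Proposition~\ref{Prop_QQPP}, \eqref{Eq_MacPS} and \eqref{Eq_Tom} never come into play, and the ``Saalsch\"utz-type summation'' you anticipate is not the missing ingredient either. What the residual sums actually need is a \emph{second} round of skew Cauchy summations, after using $b_{\nu}(q,t)b_{\nu'}(t,q)=1$ to turn one $Q$ in each pair into a $P$; this yields the factors $1/(acT;q,t)_{\infty}$ and $1/(bdT;q,t)_{\infty}$ and a new quadruple sum which, after relabelling and redistributing powers of $T$ by homogeneity, is the original left-hand side with $(a,d)\mapsto(aT,dT)$ (not ``all parameters scaled by $abcd$''), i.e.
\[
\text{LHS}(a,b,c,d;T)=\frac{(-abT,-cd;q,t)_{\infty}}{(acT,bdT;q,t)_{\infty}}\;
\text{LHS}(aT,b,c,dT;T).
\]
Iterating and noting that in the limit only $\nu=\la=\mu$, $\tau=\la'$ survive, giving $\sum_{\la}T^{\abs{\la}}=1/(T;T)_{\infty}$, would finish the proof. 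So your strategy is salvageable, but as written the central closure step — the part that produces all the $k\geq 1$ factors and $1/(T;T)_{\infty}$ — is asserted rather than proved, and the tools you propose for it do not apply.

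For comparison, the paper proceeds in the opposite direction and avoids any recursion: it first proves Proposition~\ref{Prop_doublesum} (where the infinite iteration is packaged once and for all via the functional equation $f(x,y)=f(Tx,y)\prod(tTx_iy_j;q)_{\infty}/(Tx_iy_j;q)_{\infty}$), then specialises $x\mapsto[(a-d)/(1-t)]$ and $y\mapsto[(c-b)/(1-t)]$, expands by \eqref{Eq_split} into the four-fold sum, applies \eqref{Eq_Cauchy} once to the $\mu$-sum, and finishes with the duality \eqref{Eq_abba}. Proposition~\ref{Prop_QQPP} is not used in the proof of Theorem~\ref{Thm_4fold} at all; its role comes later, in Section~\ref{Sec_fmn}, to cast $f_{n,m}$ in the form of the left-hand side of \eqref{Eq_4fold-b}.
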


Before we prove this we need the following $q,t$-analogue of a
Schur function identity from page~94 of \cite{Macdonald95}.

\begin{proposition}\label{Prop_doublesum}
We have
\begin{equation}\label{Eq_doublesum}
\sum_{\la,\nu} T^{\abs{\la}} P_{\la/\nu}(x;q,t)Q_{\la/\nu}(y;q,t)
=\frac{1}{(T;T)_{\infty}}
\prod_{i,j\geq 1} 
\frac{(tTx_iy_j;q,T)_{\infty}}{(Tx_iy_j;q,T)_{\infty}}.
\end{equation}
\end{proposition}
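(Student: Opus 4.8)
The plan is to reduce the skew double sum to the non-skew Cauchy identity \eqref{Eq_Cauchy-noskew} by introducing an auxiliary pair of alphabets that absorb the skewing, and then to recognise the resulting product as a plethystic substitution. Concretely, I would first use the skew Cauchy identity in its standard form,
\[
\sum_{\la} P_{\la/\nu}(x;q,t)\, Q_{\la/\sigma}(y;q,t)
=\prod_{i,j\geq 1}\frac{(ty_ix_j;q)_\infty}{(y_ix_j;q)_\infty}
\sum_{\kappa} Q_{\nu/\kappa}(y;q,t)\, P_{\sigma/\kappa}(x;q,t),
\]
which follows from \eqref{Eq_Cauchy-noskew} together with the coproduct/adjointness properties of skew Macdonald polynomials recorded in Section~2. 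Iterating this over a geometric chain of partitions is exactly what the $T^{|\la|}$ weighting sets up: writing $T^{|\la|}$ and summing over all $\la\supset\nu$ with multiplicity is equivalent to summing the Cauchy kernel over the alphabet $(T x_1, T^2 x_1, \dots)$, i.e.\ over $x$ tensored with $(T,T^2,T^3,\dots)$.

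The cleanest route, which I would actually carry out, is plethystic. Set $X=x\cdot\frac{T}{1-T}$ in $\la$-ring notation, so that $p_r(X)=\frac{T^r}{1-T^r}p_r(x)$. Then the branching rule \eqref{Eq_split} applied repeatedly, together with $\sum_{\la/\nu}$ telescoping, should give
\[
\sum_{\la,\nu} T^{|\la|} P_{\la/\nu}(x;q,t)\, Q_{\la/\nu}(y;q,t)
= \sum_{\nu}\; \sum_{\la\supset\nu} T^{|\la|} P_{\la/\nu}(x)\,Q_{\la/\nu}(y),
\]
and the inner geometric iteration collapses via the non-skew Cauchy identity \eqref{Eq_Cauchy-noskew} evaluated with one alphabet replaced by $x$ and the other by $y\cdot\frac{T}{1-T}$ — the $\nu$-sum over $P_{\nu}Q_{\nu}$ contributing the $(T;T)_\infty^{-1}$ factor (this is the $q,t$-analogue of the classical computation on p.~94 of \cite{Macdonald95}, where the Schur case gives $\prod(1-T)^{-1}$ from $\sum_\nu s_\nu(T,T^2,\dots)^{\text{-type}}$ contributions). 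Plugging $y_j\mapsto y_j T^k$, $k\ge1$, into \eqref{Eq_Cauchy-noskew} and taking the product over $k$ yields
\[
\prod_{k\ge1}\prod_{i,j}\frac{(tT^k x_iy_j;q)_\infty}{(T^k x_iy_j;q)_\infty}
=\prod_{i,j}\frac{(tTx_iy_j;q,T)_\infty}{(Tx_iy_j;q,T)_\infty},
\]
which is the claimed product.

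The main obstacle is bookkeeping the $\nu$-sum correctly: one must verify that the partitions $\nu$ being skewed out do not over- or under-count, and that their total contribution is exactly $\bigl(\sum_\nu T^{|\nu|}\,[\text{coupling}]\bigr)$ which must evaluate to $1/(T;T)_\infty$. In the Schur case this is the identity $\sum_\nu s_\nu(z_1,z_2,\dots)s_\nu(w_1,w_2,\dots)=\prod_{i,j}(1-z_iw_j)^{-1}$ specialised so the right side becomes $\prod_k(1-T^k)^{-1}$; the $q,t$-analogue needs \eqref{Eq_Cauchy-noskew} applied with both alphabets equal to $(T,T^2,\dots)$ up to a plethystic twist, and one must check the twist is the harmless one that still produces $(T;T)_\infty^{-1}$ rather than a $(tT;\dots)/(T;\dots)$ factor. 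I expect this to come out because the coupling between consecutive links in the chain is $P_{\nu/\kappa}(x)Q_{\sigma/\kappa}(y)$ with $x,y$ the \emph{original} finite alphabets, so in the limit the $\nu$-contribution decouples from $x,y$ entirely and reduces to the pure power-of-$T$ generating function $\prod_{k\ge1}(1-T^k)^{-1}$. Once this is confirmed, assembling \eqref{Eq_doublesum} is immediate.
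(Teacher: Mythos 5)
Your first paragraph identifies exactly the right tool --- the skew Cauchy identity \eqref{Eq_Cauchy} with both skews taken over the same partition, driven by the $T^{\abs{\la}}$ weight --- and this is indeed the engine of the paper's proof. The gap sits precisely at the point you yourself flag as the ``main obstacle'': the claims that the $\nu$-sum decouples from $x,y$ and contributes $1/(T;T)_{\infty}$, and that the remaining sum collapses to the non-skew Cauchy kernel with $y$ replaced plethystically by $y\cdot T/(1-T)$, are stated as expectations (``should give'', ``I expect this to come out'') rather than derived. They cannot be obtained by a single plethystic substitution into \eqref{Eq_Cauchy-noskew}: after one application of \eqref{Eq_Cauchy} the residual sum $\sum_{\kappa}P_{\nu/\kappa}(Tx;q,t)Q_{\nu/\kappa}(y;q,t)$ still involves the original alphabets, so the decoupling you want only emerges after infinitely many iterations --- and organising that limit is exactly the content that a proof must supply.

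The fix is short and is the paper's argument, i.e.\ the rigorous form of your first paragraph. Write $f(x,y)$ for the left-hand side, use homogeneity in the form $T^{\abs{\la}}P_{\la/\nu}(x;q,t)=T^{\abs{\nu}}P_{\la/\nu}(Tx;q,t)$, and apply \eqref{Eq_Cauchy} with $(x,\tau)\mapsto(Tx,\nu)$; after relabelling the summation indices this gives the functional equation
\[
f(x,y)=f(Tx,y)\prod_{i,j\geq 1}\frac{(tTx_iy_j;q)_{\infty}}{(Tx_iy_j;q)_{\infty}},
\]
which may be iterated (legitimately, as an identity of formal power series in $T$) to give
$f(x,y)=f(0,y)\prod_{i,j\geq 1}(tTx_iy_j;q,T)_{\infty}/(Tx_iy_j;q,T)_{\infty}$.
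The base case is then immediate: since $P_{\la/\nu}(0;q,t)=\delta_{\la\nu}$ and $Q_{\la/\la}(y;q,t)=1$, one has $f(0,y)=\sum_{\la}T^{\abs{\la}}=1/(T;T)_{\infty}$. This one line also settles the worry you raise about whether the residual contribution is $1/(T;T)_{\infty}$ or carries a $t$-dependent factor: no Cauchy summation with both alphabets specialised to $(T,T^2,\dots)$ is needed at all, and once the functional equation and base case are in place the proposition follows.
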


\begin{proof}
Denote the left-hand side of \eqref{Eq_doublesum} by $f(x,y)$
and recall the generalisation of the Cauchy identity
\eqref{Eq_Cauchy-noskew} to skew functions \cite[p.~352]{Macdonald95}
\begin{multline}\label{Eq_Cauchy}
\sum_{\la} P_{\la/\nu}(x;q,t)Q_{\la/\tau}(y;q,t) \\
=\prod_{i,j\geq 1} \frac{(tx_iy_j;q)_{\infty}}{(x_iy_j;q)_{\infty}}\,
\sum_{\la} P_{\tau/\la}(x;q,t)Q_{\nu/\la}(y;q,t).
\end{multline}
Applying this with $(x,\tau)\mapsto (Tx,\nu)$ 
and multiplying both sides by $T^{\abs{\nu}}$, 
it follows from the homogeneity of the Macdonald polynomials
that
\[
f(x,y)=\prod_{i,j\geq 1} 
\frac{(tTx_iy_j;q)_{\infty}}{(Tx_iy_j;q)_{\infty}}\,
\sum_{\la,\nu}
T^{\abs{\nu}} P_{\nu/\la}(Tx;q,t) Q_{\nu/\la}(y;q,t).
\]
By the simultaneous variable change $(\la,\nu)\mapsto(\nu,\la)$
this yields 
\[
f(x,y)=f(Tx,y) \prod_{i,j\geq 1} 
\frac{(tTx_iy_j;q)_{\infty}}{(Tx_iy_j;q)_{\infty}}
\]
and thus
\[
f(x,y)=f(0,y) \prod_{i,j\geq 1} 
\frac{(tTx_iy_j;q,T)_{\infty}}{(Tx_iy_j;q,T)_{\infty}}.
\]
By $P_{\la/\nu}(0;q,t)=\delta_{\la\nu}$ and $Q_{\la/\la}(y;q,t)=1$
we finally get
\[
f(0,y)=\sum_{\la} T^{\abs{\la}}=\frac{1}{(T;T)_{\infty}},
\]
and the claim follows.
\end{proof}

\begin{proof}[Proof of Theorem~\ref{Thm_4fold}]
If we take Proposition~\ref{Prop_doublesum}, replace $\nu\mapsto\mu$,
and then make the plethystic substitutions $x\mapsto (a-d)/(1-t)$ and
$y\mapsto (c-b)/(1-t)$, we get
\[
\sum_{\la,\mu} T^{\abs{\la}} P_{\la/\mu}\Big(\Big[\frac{a-d}{1-t}\Big];q,t\Big)
Q_{\la/\mu}\Big(\Big[\frac{c-b}{1-t}\Big];q,t\Big)
=\frac{1}{(T;T)_{\infty}}\cdot
\frac{(abT,cdT;q,t,T)_{\infty}}{(acT,bdT;q,t,T)_{\infty}}.
\]
Here the product on the right follows from \cite[p.~310]{Macdonald95}
\begin{equation}\label{Eq_prod}
\prod_{i,j\geq 1}
\frac{(tTx_iy_j;q)_{\infty}}{(Tx_iy_j;q)_{\infty}}=
\exp\bigg( \sum_{r\geq 1}\frac{T^r}{r} \cdot \frac{1-t^r}{1-q^r}\,
p_r(x) p_r(y)\bigg),
\end{equation}
equation~\eqref{Eq_pr-plet} and\footnote{This may be stated more simply
as $\Exp\big({\mp} T/(1-q)(1-t)\big)=(T;q,t)_{\infty}^{\pm}$.}
\[
\exp\bigg(\sum_{r\geq 1} \frac{\mp T^r}{r(1-q^r)(1-t^r)}\bigg)=
(T;q,t)_{\infty}^{\pm 1}.
\]
Using both equations in \eqref{Eq_split} (which also hold with $P$ replaced 
by $Q$) gives
\begin{multline*}
\sum_{\lambda,\mu,\nu,\tau}
T^{\abs{\la}} 
P_{\la/\nu}\Big(\Big[\frac{a}{1-t}\Big];q,t\Big)
Q_{\la/\tau}\Big(\Big[\frac{-b}{1-t}\Big];q,t\Big) \\ \times
P_{\nu/\mu}\Big(\Big[\frac{-d}{1-t}\Big];q,t\Big)
Q_{\tau/\mu}\Big(\Big[\frac{c}{1-t}\Big];q,t\Big)
=\frac{1}{(T;T)_{\infty}}\cdot
\frac{(abT,cdT;q,t,T)_{\infty}}{(acT,bdT;q,t,T)_{\infty}}.
\end{multline*}
Transforming the sum over $\mu$ by the Cauchy identity
\eqref{Eq_Cauchy} with $(\la,\nu,\tau)\mapsto(\mu,\tau,\nu)$,
$x\mapsto -d/(1-t)$ and $y\mapsto c/(1-t)$ leads to
\begin{multline*}
\sum_{\la,\mu,\nu,\tau}
T^{\abs{\la}}
P_{\la/\nu}\Big(\Big[\frac{a}{1-t}\Big];q,t\Big)
Q_{\la/\tau}\Big(\Big[\frac{-b}{1-t}\Big];q,t\Big) \\ \times
Q_{\mu/\nu}\Big(\Big[\frac{c}{1-t}\Big];q,t\Big)
P_{\mu/\tau}\Big(\Big[\frac{-d}{1-t}\Big];q,t\Big) 
=\frac{1}{(T;T)_{\infty}}\cdot
\frac{(abT,cd;q,t,T)_{\infty}}{(acT,bdT;q,t,T)_{\infty}},
\end{multline*}
where this time we have used \eqref{Eq_prod} with $T=1$.
By the duality \eqref{Eq_abba} this is also
\begin{multline*}
\sum_{\la,\mu,\nu,\tau}
(-1)^{\abs{\la}+\abs{\mu}}
T^{\abs{\la}}
P_{\la/\nu}\Big(\Big[\frac{a}{1-t}\Big];q,t\Big)
P_{\la'/\tau'}\Big(\Big[\frac{b}{1-q}\Big];t,q\Big) \\ \times
Q_{\mu/\nu}\Big(\Big[\frac{c}{1-t}\Big];q,t\Big)
Q_{\mu'/\tau'}\Big(\Big[\frac{d}{1-q}\Big];t,q\Big) 
=\frac{1}{(T;T)_{\infty}}\cdot
\frac{(abT,cd;q,t,T)_{\infty}}{(acT,bdT;q,t,T)_{\infty}}.
\end{multline*}
Replacing $(b,d;\tau)\mapsto (-b,-d;\tau')$, and using
\eqref{Eq_QP-skew} and \eqref{Eq_plet} completes the proof
of \eqref{Eq_4fold-b}.
\end{proof}

\section{Proof of Theorem~\ref{Thm_main}}\label{Sec_fmn}

Instead of giving a direct proof of the $q,t$-Nekrasov--Okounkov
formula we will first study a rational function 
$f_{n,m}$ which, as follows from Proposition~\ref{Prop_fnm} below, 
may be viewed as a rational function analogue of the sum side 
of \eqref{Eq_qtNO}. As it turns out, almost all of our steps towards
proving Theorem~\ref{Thm_main} can be carried out at the level of
$f_{n,m}$. 

Let
\begin{multline}\label{Eq_fdef}
f_{n,m}(u,T;q,t):=(-u)^{nm} q^{n\binom{m+1}{2}} t^{m\binom{n}{2}} \\
\times
\sum_{\la,\mu\subseteq (m^n)} T^{\abs{\la}}
(-uq^mt^{n-1})^{-\abs{\la}-\abs{\mu}} 
P_{\la}(t^{\delta_n};q,t) P_{\la'}(q^{\delta_m};t,q) \\
\times
P_{\mu}(q^{\la} t^{\delta_n};q,t) P_{\mu'}(t^{\la'}q^{\delta_m};t,q).
\end{multline}

An obvious symmetry of $f_{n,m}$ is
\[
f_{n,m}(u,T;q,t)=f_{m,n}(uq/t,T;t,q).
\]
Not as apparent are the following two additional symmetries.
\begin{lemma}
We have
\begin{subequations}
\begin{align}\label{Eq_symm1}
f_{n,m}(u,T;q,t)&=T^{nm} f_{n,m}(u/T,1/T;q,t) \\[2mm]
&=f_{n,m}(tT/uq,T;q,t) \label{Eq_symm2}.
\end{align}
\end{subequations}
\end{lemma}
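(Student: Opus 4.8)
The plan is to prove each of the two identities \eqref{Eq_symm1} and \eqref{Eq_symm2} by rewriting the defining sum \eqref{Eq_fdef} so that a term-by-term comparison becomes possible after a suitable change of summation variables and an application of the known dualities for Macdonald polynomials. For \eqref{Eq_symm1}, I would start from \eqref{Eq_fdef} and perform the replacement $\la\mapsto(m^n)/\la$, $\mu\mapsto(m^n)/\mu$ (complementation inside the $m\times n$ box), which is a bijection on pairs of partitions contained in $(m^n)$. Under complementation one has $\abs{\la}\mapsto nm-\abs{\la}$, which produces the overall factor $T^{nm}$ and turns $T$ into $1/T$ in the variable $u/T$; the arm/leg data of $\la$ inside the box transform in a controlled way. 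The key inputs are the principal specialisation formula \eqref{Eq_MacPS} together with the complementation symmetry of $P_\la(t^{\delta_n};q,t)$ (equivalently, the behaviour of $(t^n;q,t)_\la$ and $c_\la(q,t)$ under $\la\mapsto(m^n)/\la$), and crucially the Macdonald--Koornwinder duality \eqref{Eq_Tom}, which relates $P_\mu(q^{\la}t^{\delta_n};q,t)$ to $P_\la(q^{\mu}t^{\delta_n};q,t)$ and thereby swaps the roles of the two partitions being summed. One must also use the conjugate version of these statements for the factors $P_{\la'}(q^{\delta_m};t,q)$ and $P_{\mu'}(t^{\la'}q^{\delta_m};t,q)$, obtained by interchanging $(q,t,n)\leftrightarrow(t,q,m)$ and conjugating all partitions.

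For \eqref{Eq_symm2} I would argue similarly but using a different bijection: here the natural move is the single complementation $\mu\mapsto(m^n)/\mu$ on the inner partition only (or, alternatively, the substitution $\la\leftrightarrow\mu$ combined with one complementation), tracking how the prefactor $(-uq^mt^{n-1})^{-\abs{\la}-\abs{\mu}}$ and the $T^{\abs\la}$ interact. The arithmetic here is engineered so that the net effect on $u$ is $u\mapsto tT/(uq)$ while $T$ is unchanged; the symmetry $\abs{\mu}\mapsto nm-\abs{\mu}$ supplies exactly the shift needed, and the four Macdonald factors reorganise via \eqref{Eq_MacPS} and \eqref{Eq_Tom} (and their conjugates) into the same sum with the transformed parameters. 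In both cases the strategy is: (i) expand all four $P$'s by \eqref{Eq_MacPS} to get explicit hook/content products in $(q,t)$ times $(t,q)$, (ii) apply the relevant complementation bijection and collect the powers of $T$, $u$, $q$, $t$, (iii) recognise the result as $f_{n,m}$ with the claimed substituted arguments by running the same expansion backwards.

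The main obstacle I anticipate is the bookkeeping in step (ii): the complementation $\la\mapsto(m^n)/\la$ acts on the arm/leg statistics and hence on $n(\la)$, $n(\la')$, $(t^n;q,t)_\la$ and $c_\la(q,t)$ in a way that, although classical (see Macdonald, Chapter VI), produces several $q$- and $t$-power prefactors that must be shown to combine precisely with the explicit prefactor $(-u)^{nm}q^{n\binom{m+1}{2}}t^{m\binom{n}{2}}$ and with the $(-uq^mt^{n-1})^{-\abs\la-\abs\mu}$ weights. Verifying that all these exponents cancel to leave exactly the asserted renormalisation of $u$ and $T$ is the delicate part; it is essentially a (lengthy but mechanical) identity among binomial coefficients and the statistics $n(\cdot)$, and I would organise it by first checking the homogeneity/degree in each of $u$, $q$, $t$ separately and then pinning down the remaining scalar by evaluating at $\la=\mu=0$ (where $f_{n,m}$ reduces to its constant term). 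A secondary point requiring care is that \eqref{Eq_Tom} is stated for $l(\la),l(\mu)\le n$, which is automatic here since all partitions lie in $(m^n)$, but the conjugate application needs $l(\la'),l(\mu')\le m$, equally automatic; so no boundary issues arise, and the two lemma identities follow once the exponent arithmetic is discharged.
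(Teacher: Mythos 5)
Your plan for \eqref{Eq_symm2} is essentially the right one (and close to the paper's): complement $\mu$ inside $(m^n)$ using $P_{\tilde{\mu}}(x;q,t)=(x_1\cdots x_n)^m P_{\mu}(x^{-1};q,t)$, then symmetrise in $\la,\mu$ and swap. One caveat: after complementation the sum involves $P_{\mu}(q^{-\la}t^{\rho_n};q,t)$, and \eqref{Eq_Tom} as stated does not apply to this; you need its inverted form, obtained from $P_{\la}(x;q,t)=P_{\la}(x;1/q,1/t)$ followed by $(q,t)\mapsto(1/q,1/t)$, namely $P_{\la}(t^{\rho_n};q,t)P_{\mu}(q^{-\la}t^{\rho_n};q,t)=P_{\mu}(t^{\rho_n};q,t)P_{\la}(q^{-\mu}t^{\rho_n};q,t)$, before relabelling $\la\leftrightarrow\mu$; only then do the weights $(-tT/u)^{\abs{\la}}$ and $(-uq)^{\abs{\mu}}$ exchange and give $u\mapsto tT/uq$ with $T$ fixed.

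Your mechanism for \eqref{Eq_symm1}, however, does not work as described. Complementing both $\la$ and $\mu$ cannot leave $u$ essentially untouched: $u$ enters through $(-uq^mt^{n-1})^{-\abs{\la}-\abs{\mu}}$ and through the monomial prefactors produced by \eqref{Eq_xinv} and homogeneity, and flipping $\abs{\la},\abs{\mu}$ to $nm-\abs{\la},nm-\abs{\mu}$ turns the per-box weight $(-uq^mt^{n-1})^{-1}$ into $(-u)q^{1-m}t^{-n}=(-u'q^mt^{n-1})^{-1}$ with $u'=t/(uq)$. Carrying the constants through, double complementation proves $f_{n,m}(u,T;q,t)=T^{nm}f_{n,m}(t/(uq),1/T;q,t)$, i.e.\ the \emph{composition} of \eqref{Eq_symm1} and \eqref{Eq_symm2}, not \eqref{Eq_symm1} itself; your deferred ``bookkeeping in step (ii)'' is exactly where this discrepancy would surface. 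In fact \eqref{Eq_symm1} requires no complementation and no principal specialisation at all: in \eqref{Eq_fdef} the variable $T$ couples only to $\abs{\la}$, so \eqref{Eq_symm1} is precisely the statement that the remainder of the summand is symmetric under exchanging $\la$ and $\mu$, which is an immediate consequence of \eqref{Eq_Tom} (applied once in the $(q,t,n)$ form and once in the $(t,q,m)$ form) followed by relabelling the summation indices. Alternatively, you could salvage your outline by proving the composite symmetry above via double complementation and then deducing \eqref{Eq_symm1} by combining it with \eqref{Eq_symm2}, but as written the claim that complementation alone ``turns $T$ into $1/T$ in the variable $u/T$'' is incorrect.
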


\begin{proof}
By the duality \eqref{Eq_Tom} we get
\begin{multline*}
f_{n,m}(u,T;q,t)= (-u)^{nm} q^{n\binom{m+1}{2}} t^{m\binom{n}{2}} \\
\times
\sum_{\la,\mu\subseteq (m^n)} T^{\abs{\la}}
(-uq^mt^{n-1})^{-\abs{\la}-\abs{\mu}} 
P_{\mu}(t^{\delta_n};q,t) P_{\mu'}(q^{\delta_m};t,q) \\
\times
P_{\la}(q^{\mu} t^{\delta_n};q,t) P_{\la'}(t^{\mu'}q^{\delta_m};t,q).
\end{multline*}
Renaming the summation index $\la$ as $\mu$ and vice versa yields
\begin{multline*}
f_{n,m}(u,T;q,t)= (-u)^{nm} q^{n\binom{m+1}{2}} t^{m\binom{n}{2}} \\
\times
\sum_{\la,\mu\subseteq (m^n)} T^{\abs{\mu}}
(-uq^mt^{n-1})^{-\abs{\la}-\abs{\mu}} 
P_{\la}(t^{\delta_n};q,t) P_{\la'}(q^{\delta_m};t,q) \\
\times
P_{\mu}(q^{\la} t^{\delta_n};q,t) P_{\mu'}(t^{\la'}q^{\delta_m};t,q).
\end{multline*}
Comparing this with \eqref{Eq_fdef} implies \eqref{Eq_symm1}.

\medskip

Next we replace the sum over $\mu$ in \eqref{Eq_fdef} by a sum over 
its complement with respect to $(m^n)$, denoted by 
$\tilde{\mu}=(m-\mu_n,\dots,m-\mu_1)$.
Recalling that (see e.g., \cite[Equation (4.3)]{BF99})
\[
P_{\tilde{\mu}}(x_1,\dots,x_n;q,t)=(x_1\cdots x_n)^m 
P_{\mu}(x_1^{-1},\dots,x_n^{-1};q,t),
\]
this yields
\begin{multline*}
f_{n,m}(u,T;q,t)=
\sum_{\la,\mu\subseteq (m^n)} 
(-tT/u)^{\abs{\la}} (-uq^mt^{n-1})^{\abs{\mu}} 
P_{\la}(t^{\delta_n};q,t) P_{\la'}(q^{\delta_m};t,q) \\ \times
P_{\mu}(q^{-\la} t^{-\delta_n};q,t) P_{\mu'}(t^{-\la'}q^{-\delta_m};t,q).
\end{multline*}
Since 
$P_{\la}(t^{\delta_n};q,t)=P_{\la}(t^{\rho_n};q,t)$
and
$P_{\mu}(q^{-\la} t^{-\delta_n};q,t)=t^{(1-n)\abs{\mu}}
P_{\mu}(q^{-\la} t^{\rho_n};q,t)$,
this can be further transformed into
\begin{multline}\label{Eq_beforelimit}
f_{n,m}(u,T;q,t)=
\sum_{\la,\mu\subseteq (m^n)} (-tT/u)^{\abs{\la}} (-uq)^{\abs{\mu}} 
P_{\la}(t^{\rho_n};q,t) P_{\la'}(q^{\rho_m};t,q) \\ \times
P_{\mu}(q^{-\la} t^{\rho_n};q,t) P_{\mu'}(t^{-\la'}q^{\rho_m};t,q).
\end{multline}
Applying the symmetry $P_{\la}(x;q,t)=P_{\la}(x;1/q,1/t)$ 
(see \cite[p.~324]{Macdonald95}) to \eqref{Eq_Tom}
and then replacing $(q,t)$ by $(1/q,1/t)$ we obtain
\[
P_{\la}(t^{\rho_n};q,t) P_{\mu}(q^{-\la}t^{\rho_n};q,t)
=P_{\mu}(t^{\rho_n};q,t) P_{\la}(q^{-\mu}t^{\rho_n};q,t).
\]
Using this in \eqref{Eq_beforelimit} and then again swapping
$\la$ and $\mu$, we find
\begin{multline*}
f_{n,m}(u,T;q,t)=
\sum_{\la,\mu\subseteq (m^n)} (-tT/u)^{\abs{\mu}} (-uq)^{\abs{\la}} 
P_{\la}(t^{\rho_n};q,t) P_{\la'}(q^{\rho_m};t,q) \\ \times
P_{\mu}(q^{-\la} t^{\rho_n};q,t) P_{\mu'}(t^{-\la'}q^{\rho_m};t,q).
\end{multline*}
Comparing the above with \eqref{Eq_beforelimit} yields \eqref{Eq_symm2}.
\end{proof}

Next we compute $f_{n,m}$ in two different ways.
First, using the homogeneity of the Macdonald polynomials and the
dual Cauchy identity \cite[p.~329]{Macdonald95}
\begin{equation}\label{Eq_dual-Cauchy}
\sum_{\mu} T^{\abs{\mu}}P_{\mu}(x;q,t) P_{\mu'}(y;t,q)=
\prod_{i,j\geq 1} (1+Tx_i y_j),
\end{equation}
we can perform the sum over $\mu$ in \eqref{Eq_fdef}. Also using
\begin{multline*}
\prod_{i=1}^n \prod_{j=1}^m \big(1-u^{-1} q^{\la_i-j} t^{\la'_j-i+1}\big)
=(-u)^{-nm} q^{-n\binom{m+1}{2}}t^{-m\binom{n}{2}}
(q^mt^n)^{\abs{\la}} \\ \times
\prod_{i=1}^n \prod_{j=1}^m \big(1-uq^{j-\la_i} t^{i-\la'_j-1}\big),
\end{multline*}
this gives
\begin{multline}\label{Eq_befrem}
f_{n,m}(u,T;q,t)=
\sum_{\la\subseteq (m^n)} (-tT/u)^{\abs{\la}} 
P_{\la}(t^{\delta_n};q,t) P_{\la'}(q^{\delta_m};t,q) \\ \times
\prod_{i=1}^n \prod_{j=1}^m \big(1-u q^{j-\la_i} t^{i-\la'_j-1}\big).
\end{multline}
Before we proceed we remark that if $u=t$ then the summand contains the factor
\[
\prod_{i=1}^n \prod_{j=1}^m \big(1-q^{j-\la_i} t^{i-\la'_j}\big).
\]
This vanishes for all $\la\subseteq (m^n)$ with the exception of $\la=0$.
Similarly, if $u=1/q$ then the summand contains the factor
\[
\prod_{i=1}^n \prod_{j=1}^m \big(1-q^{j-\la_i-1} t^{i-\la'_j-1}\big),
\]
which vanishes unless $\la=(m^n)$. 
Finally, if we replace $T$ by $uT$ and then let $u$ tend to $0$ we are 
left with
\[
\lim_{u\to 0} f_{n,m}(u,uT;q,t)=
\sum_{\la\subseteq (m^n)} 
(-tT)^{\abs{\la}} 
P_{\la}(t^{\delta_n};q,t) P_{\la'}(q^{\delta_m};t,q),
\]
which can be summed by \eqref{Eq_dual-Cauchy}.
We summarise these three observations
in the following lemma, where we have also used that 
$P_{(m^n)}(x_1,\dots,x_n;q,t)=(x_1\cdots x_n)^m$.
\begin{lemma}
We have
\begin{subequations}
\begin{align}
\label{Eq_feval1}
f_{n,m}(t,T;q,t)&=\prod_{i=1}^n \prod_{j=1}^m (1-q^jt^i) \\
f_{n,m}(1/q,T;q,t)&=T^{mn} \prod_{i=1}^n \prod_{j=1}^m (1-q^j t^i) \\
\lim_{u\to 0} f_{n,m}(u,uT;q,t)&=\prod_{i=1}^n \prod_{j=1}^m (1-Tq^{j-1} t^i).
\label{Eq_feval3}
\end{align}
\end{subequations}
\end{lemma}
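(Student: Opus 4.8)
All three identities are read off from the single-sum formula \eqref{Eq_befrem}, so the plan is to track what the finite product $\prod_{i=1}^{n}\prod_{j=1}^{m}\bigl(1-uq^{j-\la_i}t^{i-\la'_j-1}\bigr)$ occurring there does under each of the three specialisations of $u$, with one application of the dual Cauchy identity \eqref{Eq_dual-Cauchy} at the end. First, for $u=t$ this product becomes $\prod_{i,j}\bigl(1-q^{j-\la_i}t^{i-\la'_j}\bigr)$, and the point to pin down is that it vanishes for every nonzero $\la\subseteq(m^n)$. I would argue this by taking $i=l(\la)$ and $j=\la_{l(\la)}$: column $\la_{l(\la)}$ of the diagram of $\la$ contains a box in precisely the rows $1,\dots,l(\la)$, so $\la'_{\la_{l(\la)}}=l(\la)$ and the $(i,j)$-factor is $1-q^0t^0=0$, while $(i,j)$ lies in the admissible range exactly because $\la\subseteq(m^n)$. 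Hence only $\la=0$ survives in \eqref{Eq_befrem}, contributing $P_0(t^{\delta_n};q,t)\,P_0(q^{\delta_m};t,q)\prod_{i,j}(1-q^jt^i)=\prod_{i,j}(1-q^jt^i)$, which is the first identity.

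The case $u=1/q$ is dual. The product is now $\prod_{i,j}\bigl(1-q^{j-\la_i-1}t^{i-\la'_j-1}\bigr)$, and I would show it vanishes unless $\la=(m^n)$ by letting $i_0$ be the smallest row index with $\la_{i_0}<m$ and setting $j_0=\la_{i_0}+1\leq m$: rows $1,\dots,i_0-1$ have length $m\geq j_0$ while all rows $\geq i_0$ have length $<j_0$, so $\la'_{j_0}=i_0-1$ and the $(i_0,j_0)$-factor is again $1-q^0t^0=0$. Thus only the rectangle $\la=(m^n)$ contributes, and one finishes by evaluating that single term: using $P_{(m^n)}(t^{\delta_n};q,t)=t^{m\binom{n}{2}}$, $P_{(n^m)}(q^{\delta_m};t,q)=q^{n\binom{m}{2}}$, and $\prod_{i,j}(1-q^{j-m-1}t^{i-n-1})=\prod_{i,j}(1-q^{-j}t^{-i})$ rewritten via $1-q^{-j}t^{-i}=-q^{-j}t^{-i}(1-q^jt^i)$, one collects the resulting monomial factors against the prefactor $(-tT/u)^{nm}=(-qtT)^{nm}$ to obtain $T^{nm}\prod_{i,j}(1-q^jt^i)$. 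This last step is the one place a routine slip is easy: checking that the powers of $q$ and $t$ coming from $P_{(m^n)}$, $P_{(n^m)}$ and the monomial extracted from $\prod_{i,j}(1-q^{-j}t^{-i})$ cancel against $(-qtT)^{nm}$ down to $T^{nm}$ uses nothing beyond $\binom{k+1}{2}-\binom{k}{2}=k$, but it is a natural spot for a sign or exponent error.

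For the limit I would substitute $T\mapsto uT$ directly in \eqref{Eq_befrem}: this sends $(-tT/u)^{\abs{\la}}$ to $(-tT)^{\abs{\la}}$, while $\prod_{i,j}\bigl(1-uq^{j-\la_i}t^{i-\la'_j-1}\bigr)$ stays a polynomial in $u$ with constant term $1$; hence $f_{n,m}(u,uT;q,t)\in\Rat(q,t)[u,T]$ and its value at $u=0$ is $\sum_{\la\subseteq(m^n)}(-tT)^{\abs{\la}}P_{\la}(t^{\delta_n};q,t)P_{\la'}(q^{\delta_m};t,q)$, the restriction $\la\subseteq(m^n)$ being automatic because both principal specialisations vanish once $l(\la)>n$ or $\la_1>m$. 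Applying the dual Cauchy identity \eqref{Eq_dual-Cauchy} with $x=t^{\delta_n}$, $y=q^{\delta_m}$ and the formal parameter replaced by $-tT$ then yields $\prod_{i=1}^{n}\prod_{j=1}^{m}\bigl(1-Tt^iq^{j-1}\bigr)$, which is the third identity.
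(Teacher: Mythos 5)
Your proposal is correct and follows essentially the same route as the paper: all three evaluations are read off from \eqref{Eq_befrem}, with the $u=t$ and $u=1/q$ cases killed by a vanishing factor (leaving only $\la=0$, respectively $\la=(m^n)$, the latter evaluated via $P_{(m^n)}(x_1,\dots,x_n;q,t)=(x_1\cdots x_n)^m$), and the $u\to 0$ limit summed by the dual Cauchy identity \eqref{Eq_dual-Cauchy}. You merely supply details the paper leaves implicit (the explicit box witnessing the vanishing and the exponent bookkeeping for the rectangular term), and these check out.
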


Returning to \eqref{Eq_befrem}, we use
\begin{multline*}
\prod_{i=1}^n \prod_{j=1}^m \big(1-u q^{j-\la_i} t^{i-\la'_j-1}\big) =
(-u)^{\abs{\la}} q^{-n(\la')} t^{-n(\la)-\abs{\la}} \\
\times
\prod_{i=1}^n \prod_{j=1}^m \big(1-u q^j t^{i-1}\big)
\prod_{s\in\la} 
\frac{(1-uq^{a(s)+1}t^{l(s)})(1-u^{-1} q^{a(s)} t^{l(s)+1})}
{(1-uq^{m-a'(s)} t^{l'(s)})(1-u q^{a'(s)+1} t^{n-l'(s)-1})}
\end{multline*}
together with the principal specialisation formula \eqref{Eq_MacPS} 
to obtain the following.

\begin{proposition}\label{Prop_fnm}
The rational function $f_{n,m}$ can be expressed as
\begin{multline*}
f_{n,m}(u,T;q,t)=
\prod_{i=1}^n \prod_{j=1}^m \big(1-u q^j t^{i-1}\big) \\
\times
\sum_{\la\subseteq (m^n)} 
T^{\abs{\la}} 
\prod_{s\in\la}\bigg(
\frac{(1-q^{a'(s)}t^{n-l'(s)})(1-q^{m-a'(s)}t^{l'(s)})}
{(1-u q^{a'(s)+1} t^{n-l'(s)-1})(1-uq^{m-a'(s)} t^{l'(s)})} \\
\times
\frac{(1-uq^{a(s)+1}t^{l(s)})(1-u^{-1} q^{a(s)} t^{l(s)+1})}
{(1-q^{a(s)+1}t^{l(s)})(1-q^{a(s)}t^{l(s)+1})} \bigg).
\end{multline*}
\end{proposition}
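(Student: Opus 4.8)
The plan is to start from the expression \eqref{Eq_befrem} for $f_{n,m}$, which was obtained by summing out $\mu$ via the dual Cauchy identity, and to rewrite each of the three factors in the summand in terms of arm, leg, arm-colength and leg-colength data. The first and most routine task is to insert the principal specialisation formula \eqref{Eq_MacPS} for both $P_{\la}(t^{\delta_n};q,t)$ and $P_{\la'}(q^{\delta_m};t,q)$. Using \eqref{Eq_MacPS} directly gives
\[
P_{\la}(t^{\delta_n};q,t)=t^{n(\la)}\prod_{s\in\la}\frac{1-q^{a'(s)}t^{n-l'(s)}}{1-q^{a(s)}t^{l(s)+1}},
\]
and the analogous formula for $P_{\la'}(q^{\delta_m};t,q)$, after using $a_{\la'}(s)=l_{\la}(s)$, $l_{\la'}(s)=a_{\la}(s)$, $a'_{\la'}(s)=l'_{\la}(s)$, $l'_{\la'}(s)=a'_{\la}(s)$ and $n(\la')=\sum_{s}a'(s)$, reads
\[
P_{\la'}(q^{\delta_m};t,q)=q^{n(\la')}\prod_{s\in\la}\frac{1-t^{l'(s)}q^{m-a'(s)}}{1-t^{l(s)}q^{a(s)+1}}.
\]
Multiplying these two together produces the numerator factors $(1-q^{a'(s)}t^{n-l'(s)})(1-q^{m-a'(s)}t^{l'(s)})$ and the denominator factors $(1-q^{a(s)}t^{l(s)+1})(1-q^{a(s)+1}t^{l(s)})$ that appear in the claimed formula, together with an overall power $t^{n(\la)}q^{n(\la')}$.

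The second, slightly more delicate task is to re-express the product
\[
\prod_{i=1}^n\prod_{j=1}^m\bigl(1-uq^{j-\la_i}t^{i-\la'_j-1}\bigr)
\]
over the \emph{rectangle} $(m^n)$ as a product over the squares of $\la$ (inside the rectangle), over the squares of the skew shape $(m^n)/\la$, and an overall monomial prefactor. The key observation is that the rectangle $i\in\{1,\dots,n\}$, $j\in\{1,\dots,m\}$ splits into $s\in\la$ and $s\in(m^n)/\la$. For $s=(i,j)\in\la$ one has $j-\la_i=-a(s)$ and $i-\la'_j=-l(s)$, so the factor becomes $1-uq^{-a(s)}t^{-l(s)-1}$; extracting $-uq^{-a(s)}t^{-l(s)-1}$ turns this into $-uq^{-a(s)}t^{-l(s)-1}(1-u^{-1}q^{a(s)}t^{l(s)+1})$, and the product of the monomial prefactors over $s\in\la$ is $(-u)^{\abs\la}q^{-n(\la)}t^{-n(\la')-\abs\la}$ — wait, one should be careful here: summing $a(s)$ over $s\in\la$ gives $\sum_i\binom{\la_i}{2}=n(\la')$ and summing $l(s)$ gives $n(\la)$, so the prefactor is $(-u)^{\abs\la}q^{-n(\la')}t^{-n(\la)-\abs\la}$. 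This is precisely the prefactor recorded in the intermediate identity in the paper, and it will cancel against the $t^{n(\la)}q^{n(\la')}$ coming from the principal specialisations, up to a residual $(-tT/u)^{\abs\la}\cdot(-u)^{\abs\la}t^{-\abs\la}=(-T)^{\abs\la}\cdot(-1)^{\abs\la}$; tracking the signs carefully shows everything collapses to $T^{\abs\la}$ as claimed. For the complementary squares $s=(i,j)\in(m^n)/\la$ one checks that $j-\la_i=a'_{(m^n)/\la}(s)+1$-type shifts produce the factors $(1-uq^{m-a'(s)}t^{l'(s)})$ and $(1-uq^{a'(s)+1}t^{n-l'(s)-1})$ in the denominator once we divide the full rectangular product $\prod_{i=1}^n\prod_{j=1}^m(1-uq^jt^{i-1})$ by them. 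Concretely, $\prod_{i,j}(1-uq^jt^{i-1})$ over the whole rectangle equals the product over $s\in\la$ of $(1-uq^{a'(s)+1}t^{l'(s)})$ times the product over $(m^n)/\la$ of the corresponding factor, and matching the two rectangular products against each other — the one with $(i,j)$ and the one with $(j-\la_i, i-\la'_j)$ — is where the identity displayed just before the proposition comes from.

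The main obstacle, and the step I would spend the most care on, is the bookkeeping in the middle paragraph: verifying that the combinatorial substitution
\[
\prod_{i=1}^n\prod_{j=1}^m\bigl(1-uq^{j-\la_i}t^{i-\la'_j-1}\bigr)=(-u)^{\abs{\la}}q^{-n(\la')}t^{-n(\la)-\abs{\la}}\prod_{i,j}(1-uq^jt^{i-1})\prod_{s\in\la}\frac{(1-uq^{a(s)+1}t^{l(s)})(1-u^{-1}q^{a(s)}t^{l(s)+1})}{(1-uq^{m-a'(s)}t^{l'(s)})(1-uq^{a'(s)+1}t^{n-l'(s)-1})}
\]
holds as an identity of rational functions. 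The cleanest way to see this is the standard bijection argument: the map $(i,j)\mapsto$ (a square of $\la$ or of $(m^n)/\la$, read off from whether $j\le\la_i$) sets up, within each row, a pairing between the arm/leg data of $\la$ and the "complementary" arm-colength/leg-colength data, exactly as in Macdonald's derivation of the principal specialisation and the finite-$n$ Cauchy formula. Once this substitution is in hand, one divides both sides of \eqref{Eq_befrem} accordingly, collects all powers of $u$, $q$, $t$ and checks the scalar prefactor $\prod_{i=1}^n\prod_{j=1}^m(1-uq^jt^{i-1})$ pulls out of the sum, leaving exactly the displayed formula. No deep input beyond \eqref{Eq_MacPS}, \eqref{Eq_dual-Cauchy} (already used to reach \eqref{Eq_befrem}) and elementary arm/leg combinatorics is required.
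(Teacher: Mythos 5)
Your proposal is correct and follows essentially the same route as the paper: starting from \eqref{Eq_befrem}, you insert the principal specialisation formula \eqref{Eq_MacPS} for $P_{\la}(t^{\delta_n};q,t)$ and $P_{\la'}(q^{\delta_m};t,q)$, invoke the same rectangular-product identity displayed just before the proposition, and your bookkeeping of the monomial prefactors (after the self-correction) rightly collapses everything to $T^{\abs{\la}}$ with the $\la$-independent factor $\prod_{i=1}^n\prod_{j=1}^m(1-uq^jt^{i-1})$ pulled out. The paper likewise treats the rectangle identity as an elementary verification, so your level of detail matches its proof.
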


As an immediate application of the proposition we have
that $f(u,T;q,t)$ defined in \eqref{Eq_rat-limit} is given by
\[
f(u,T;q,t)=\lim_{n,m\to\infty} f_{n,m}(u,T;q,t).
\]

\medskip 

For our second computation of $f_{n,m}$ we start with 
the representation given in \eqref{Eq_beforelimit}
and twice apply the finite form of Proposition~\ref{Prop_QQPP} given by
\eqref{Eq_prop-finite}. Then 
\begin{align*}
f_{n,m}(u,T;q,t)&=
\sum_{\la,\mu,\nu,\tau\subseteq (m^n)} (-tT/u)^{\abs{\la}} (-uq)^{\abs{\mu}} 
q^{-\abs{\tau}} t^{-\abs{\nu}} 
\\ & \qquad \quad\times 
\frac{(t^n;q,t)_{\la}(t^n;q,t)_{\mu}}{(t^n;q,t)_{\tau'} \, b_{\tau'}(q,t)} 
\cdot
\frac{(q^m;t,q)_{\la'}(q^m;t,q)_{\mu'}}{(q^m;t,q)_{\nu'} \, b_{\nu'}(t,q)} 
\\[1mm]
& \qquad \quad \times 
Q_{\la/\nu}(t^{\rho};q,t) Q_{\la'/\tau}(q^{\rho};t,q) 
Q_{\mu/\nu}(t^{\rho};q,t) Q_{\mu'/\tau}(q^{\rho};t,q).
\end{align*}
Since we do not know of a suitable finite analogue of 
\eqref{Eq_4fold-b}, we next let $n$ and $m$ tend to infinity,
and use \eqref{Eq_bdual} as well as the
homogeneity of the Macdonald polynomials. This yields
\begin{multline*}
f(u,T;q,t)=
\sum_{\la,\mu,\nu,\tau\subseteq (m^n)} T^{\abs{\la}} 
b_{\nu}(q,t) b_{\tau}(t,q) 
Q_{\la/\nu}(-t^{\rho+1}/u;q,t) Q_{\la'/\tau}(q^{\rho};t,q) 
\\ \times
Q_{\mu/\nu}(-ut^{\rho};q,t) Q_{\mu'/\tau}(q^{\rho+1};t,q).
\end{multline*}
By \eqref{Eq_4fold-b} with $(a,b,c,d)=(-t/u,1,-u,q)$ the sum
evaluates in closed form as
\begin{align}\label{Eq_fproduct}
f(u,T;q,t)&=\frac{1}{(T;T)_{\infty}}\cdot 
\frac{(uq,u^{-1}tT;q,t,T)_{\infty}}
{(qT,tT;q,t,T)_{\infty}} \\
&=\frac{(uq,u^{-1}tT;q,t,T)_{\infty}}{(T,qtT;q,t,T)_{\infty}}.
\notag
\end{align}
Equating this with \eqref{Eq_rat-limit} and dividing both sides by
$(uq;q,t)_{\infty}$ results in \eqref{Eq_qtNO}.

\bigskip

To conclude this section we make some final remarks about 
$f_{n,m}$ and why it is an interesting function in its own right.
First of all, from \eqref{Eq_fdef} we have 
$f_{n,m}(u,T;q,t)\in\Rat(q,t)[u,u^{-1},T]$.
A stronger result appears to hold as follows.

\begin{conjecture}
The function $f_{n,m}(u,T;q,t)$ lies in $\Z[q,t,u,u^{-1},T]$.
\end{conjecture}

For $n=1$ (or $m=1$) this is easily seen to be true. For example,
taking $n=1$ in Proposition~\ref{Prop_fnm} yields
\begin{equation}\label{Eq_fniseen}
f_{1,m}(u,T;q,t)=\sum_{k=0}^m T^k \qbin{m}{k}_q (t/u;q)_k (uq;q)_{m-k},
\end{equation}
where $\qbin{m}{k}_q$ is the classical $q$-binomial coefficient
(see e.g., \cite[Equation (I.39)]{GR04}).
Since the summand lies in $\Z[q,t,u,u^{-1},T]$,
so does $f_{1,m}(u,T;q,t)$.
For $n,m>1$, however, the conjectured polynomiality is much deeper.

The function $f_{n,m}$ may be viewed as a generalised  
basic hypergeometric series, and some of the symmetries and evaluations 
proved in this section are generalisations of well-known results 
for such series.
Defining \cite[p.~68]{Rains05}
\[
C_{\la}^{-}(z;q,t):=
\prod_{s\in\la} \big(1-zq^{a(s)} t^{l(s)}\big),
\]
we may rewrite Proposition~\ref{Prop_fnm} as 
\begin{multline*}
f_{n,m}(u,T;q,t)=
\prod_{i=1}^n \prod_{j=1}^m \big(1-u q^j t^{i-1}\big) \\
\times
\sum_{\la\subseteq (m^n)} 
\frac{C_{\la}^{-}(uq;q,t) C_{\la}^{-}(t/u;q,t)}
{C_{\la}^{-}(q;q,t) C_{\la}^{-}(t;q,t)}\cdot
\frac{(t^n;q,t)_{\la} (q^{-m};q,t)_{\la}}
{(uqt^{n-1};q,t)_{\la} (q^{-m}/u;q,t)_{\la}}\,
\Big(\frac{T}{u}\Big)^{\abs{\la}}.
\end{multline*}
For $n=1$ (see also \eqref{Eq_fniseen}) this simplifies to
the $\qhyp{2}{1}$ basic hypergeometric series
\begin{equation}\label{Eq_qhyp}
f_{1,m}(u,T;q,t)=(uq;q)_m \:
\qhyp{2}{1}\bigg[\genfrac{}{}{0pt}{}
{t/u,q^{-m}}{q^{-m}/u};q,\frac{T}{u}\bigg],
\end{equation}
where \cite{GR04}
\[
\qhyp{r+1}{r}\bigg[\genfrac{}{}{0pt}{}
{a_1,\dots,a_{r+1}}{b_1,\dots,b_r};q,z\bigg]:=
\sum_{k=0}^{\infty} \frac{(a_1;q)_k\cdots (a_{r+1};q)_k}
{(q;q)_k (b_1;q)_k\cdots (b_r;q)_k}\, z^k.
\]
The symmetry \eqref{Eq_symm1} can thus be viewed as
a generalisation of Heine's well-known transformation 
formula \cite[Equation (III.2)]{GR04}
\[
\qhyp{2}{1}\bigg[\genfrac{}{}{0pt}{}
{aw,q^{-m}}{wq^{-m}};q,z\bigg]=
\Big(\frac{z}{w}\Big)^m \frac{(q/z;q)_m}{(q/w;q)_m}\:
\qhyp{2}{1}\bigg[\genfrac{}{}{0pt}{}
{az,q^{-m}}{zq^{-m}};q,w\bigg].
\]
Similarly, \eqref{Eq_symm2} generalises
\[
\qhyp{2}{1}\bigg[\genfrac{}{}{0pt}{}
{q/w,q^{-m}}{aq^{-m}/w};q,z\bigg]=
\frac{(zq/a;q)_m}{(wq/a;q)_m}\:
\qhyp{2}{1}\bigg[\genfrac{}{}{0pt}{}
{q/z,q^{-m}}{aq^{-m}/z};q,w\bigg],
\]
which is a limiting case of the $\qhyp{3}{2}$
transformation formula \cite[Equation (III.11)]{GR04}.
Also, by \eqref{Eq_symm2}, the evaluation \eqref{Eq_feval1}
is equivalent to
\begin{equation}\label{Eq_feval2}
f_{n,m}(T/q,T;q,t)=\prod_{i=1}^n \prod_{j=1}^m (1-q^jt^i).
\end{equation}
Comparing this with the $u=T/q$ case of \eqref{Eq_qhyp}
shows that \eqref{Eq_feval2} can be viewed as generalisations 
of the $q$-Chu--Vandermonde summation \cite[Equation (II.6)]{GR04}
\[
\qhyp{2}{1}\bigg[\genfrac{}{}{0pt}{}
{a,q^{-m}}{bq^{-m}};q,q\bigg]=
\frac{(aq/b;q)_m}{(q/b;q)_m}.
\]
Finally we note that if we let $m$ tend to infinity in
\eqref{Eq_qhyp} we can sum the resulting $\qhyp{1}{0}$ series
by the  
$q$-binomial theorem \cite[Equation (II.3)]{GR04}. Hence
\begin{equation}\label{Eq_limf1m}
\lim_{m\to\infty} f_{1,m}(u,T;q,t)
=\frac{(uq,tT/u;q)_{\infty}}{(T;q)_{\infty}}.
\end{equation}
Unfortunately, $\lim_{m\to\infty} f_{n,m}(u,T;q,t)$ for finite $n>1$,
which interpolates between 
\eqref{Eq_fproduct} and \eqref{Eq_limf1m},
does not admit a simple factorised expression.

\section{Special cases of the $q,t$-Nekrasov--Okounkov formula}\label{Sec_cases}

The Nekrasov--Okounkov formula \eqref{Eq_NO} contains many classical
identities as special cases. For $\mu=0$ it yields Euler's formula
for the generating function of partitions. For $z=2$ only 
the staircase partitions $\delta_n$ for $n\geq 1$ 
contribute to the sum and \eqref{Eq_NO} simplifies to
Jacobi's identity for the third power of the Dedekind eta function 
$\eta(\tau)$.
More generally, for $z=p$ with $p$ a positive integer, \eqref{Eq_NO}
it is related to Macdonald's expansion \cite[pp.~134 and 135]{Macdonald72}
for the $(p^2-1)$th power of $\eta(\tau)$.
In a different vein (see e.g., \cite{Han10}), 
by setting $z^2=-x/T$, taking the $T\to 0$ limit,
and then extracting coefficients of $x^n$, 
the Nekrasov--Okounkov formula simplifies to
\begin{equation}\label{Eq_nfactorial}
\sum_{\la\vdash n}
\prod_{s\in\la}\frac{1}{h(s)^2}=\frac{1}{n!},
\end{equation}
which is a well-known identity related to the Robinson--Schensted--Knuth
correspondence \cite{Knuth70,Robinson38,Schensted61},
the Frame--Robinson--Thrall formula \cite{FRT54} and the Plancherel
measure on partitions \cite{BOO00}. 

Some of the above-mentioned special cases have nice generalisations to 
the Macdonald polynomial or the $t=q$ (i.e., Schur) level.
For example, if we replace $u$ by $-u/qT$ in \eqref{Eq_qtNO}, then 
let $T$ tend to $0$ and finally extract coefficients of $u^n$ we obtain
a $q,t$-analogue of \eqref{Eq_nfactorial}
\[
\sum_{\la\vdash n} \frac{q^{n(\la')} t^{n(\la)}}
{c_{\la}(q,t)c'_{\la}(q,t)}=[u^n] (-u;q,t)_{\infty}=
e_n\bigg(\bigg[\frac{1}{(1-q)(1-t)}\bigg]\bigg),
\]
where $e_n(x)$ is the $n$-th elementary symmetric function
and $1/(1-q)(1-t)$ is plethystic notation for the cartesian
product of the alphabets $\{1,q,q^2,\dots\}$ and 
$\{1,t,t^2,\dots\}$.
As an identity this is not actually new --- it for example 
follows by specialising $x=t^{\rho}$, $y=q^{\rho}$, $T=u$ 
in the dual Cauchy identity \eqref{Eq_dual-Cauchy} and using
the large large-$n$ limit of \eqref{Eq_MacPS}, see also
\cite{GH96} ---
but the point is that it is contained in \eqref{Eq_qtNO}.

Another interesting special case corresponds to $u=q^{-p}$ for $p$
a positive integer. Then the summand of \eqref{Eq_qtNO} contains the
factor
\[
\prod_{s\in\la}\big(1-q^{a(s)-p+1}t^{l(s)}\big),
\]
which vanishes unless $\la$ is a partition
such that $\la_i-\la_{i+1}\leq p-1$ for all $1\leq i\leq l(\la)$. 
In other words, consecutive parts should differ
by at most $p-1$ and also the smallest part has size at most $p-1$.
If we denote this set of partitions by $D_p$ (for example, $D_1=\{0\}$,
$D_2=\{\la: \la'\text{ is strict}\}$, and the number of partitions
in $D_p$ of length $l$ is $p^l-p^{l-1}$), then
\begin{equation}\label{Eq_Dp}
\sum_{\la\in D_p} T^{\abs{\la}}
\prod_{s\in\la}\frac{(1-q^{a(s)-p+1}t^{l(s)})(1-q^{a(s)+p}t^{l(s)+1})}
{(1-q^{a(s)+1}t^{l(s)})(1-q^{a(s)}t^{l(s)+1})}  
=\prod_{i=1}^{p-1} \frac{(q^{i-p}T;t,T)_{\infty}}{(q^itT;t,T)_{\infty}}.
\end{equation}
A much stronger restriction results if we take $q=t$ in \eqref{Eq_Dp}. 
Then partitions with hook-lengths equal to $p$ vanish. Partitions
with no such hook-lengths are known as $p$-cores and play an
important role in the modular representation theory of the symmetric
group, see e.g., \cite{Nakayama41,Robinson38}. Thus, with $C_p$ denoting
the set of $p$-cores,
\begin{multline}\label{Eq_Cp}
\sum_{\la\in C_p} T^{\abs{\la}}
\prod_{h\in\mathscr{H}(\la)}\frac{(1-t^{h-p})(1-t^{h+p})}{(1-t^h)^2} \\
=(T;T)_{\infty}^{p-1}
\prod_{1\leq i<j\leq p}(t^{j-i}T,t^{i-j}T;T)_{\infty}.
\end{multline}
The set of $2$-cores is given by $C_2=\{\delta_n:n\geq 1\}$, and
for $p=2$ we thus recover the Jacobi triple product identity \cite{GR04}
\[
\sum_{n\geq 1} (-1)^n T^{\binom{n}{2}}\frac{t^n-t^{1-n}}{1-t}
=(T,tT,t^{-1}T;T)_{\infty}.
\]
More generally, \eqref{Eq_Cp} is the Macdonald identity for the
affine root system $\mathrm{A}_{p-1}^{(1)}$ \cite{Macdonald72}
specialised as
\[
\eup^{-\alpha_0}\mapsto Tt^{1-p},\quad
\eup^{-\alpha_1},\dots,\eup^{-\alpha_{p-1}}\mapsto t,
\]
where $\alpha_0,\dots,\alpha_{p-1}$ are the simple roots.
This can be seen using a well-known parametrisation of $p$-cores
due to Klyachko \cite{Klyachko82} and ``Bijection 2'' from the work
of Garvan, Kim and Stanton \cite{GKS90}. For more details we also refer
to \cite{DH11,Han10,HJ11}. Identity \eqref{Eq_Dp} should thus be regarded as a
generalisation of the Jacobi triple product identity and the specialised
Macdonald identity of type $\mathrm{A}$.

\medskip

After completion of an earlier version of this paper, Amer Iqbal informed us 
of his joint work with Koz\c{c}az and Shabbir \cite{IKS10} on the refined 
topological vertex. This is defined as the rational function
\begin{multline*}
C_{\la\mu\nu}(t,q):=
q^{n(\mu')+n(\nu')+\frac{1}{2}(\abs{\la}+\abs{\mu}+\abs{\nu})} 
t^{-n(\mu)} c_{\nu}(q,t)^{-1}  \\ \times
\sum_{\eta} t^{-\abs{\eta}} s_{\la'/\eta}\big(t^{\rho} q^{-\nu}\big)
s_{\mu/\eta}\big(q^{\rho}t^{-\nu'}\big),
\end{multline*}
(where $s_{\la/\mu}$ is a skew Schur function) and reduces to the 
ordinary topological vertex \cite{AKMV05,ORV06} for $t=q$.
In their paper Iqbal \emph{et al.} use geometric considerations as
a heuristic to generate identities
for the refined topological vertex. This in turn leads
to numerous $q,t$-hook-length formulas, see \cite[Section 6]{IKS10}.
As remarked in their paper, these identities are not rigorously proved, but
checked up to some fixed order in the parameters using a computer.
Their Example 3, arising from a $5$-dimensional $U(1)$ gauge theory
is, up to a renaming of the variables, precisely our 
\eqref{Eq_qtNO}.\footnote{In the subsequent paper \cite{INRS12} Iqbal 
\emph{et al.} prove this for $t=q$ using the cyclic symmetry of the
ordinary topological vertex.}

Macdonald polynomials can also be applied to deal with the other identities
from \cite{IKS10}, and below we discuss in detail \cite[Example 4]{IKS10} 
arising from a $5$-dimensional supersymmetric $U(1)$ gauge 
theory with two hypermultiplets.
\begin{proposition}\label{Prop_IKB}
We have
\begin{subequations}\label{Eq_drie}
\begin{align}\label{Eq_drie-1}
&\sum_{\mu,\nu} \frac{(-u)^{\abs{\mu}}(-v)^{\abs{\nu}}
q^{n(\mu')+n(\nu')} t^{n(\mu)+n(\nu)}}
{c_{\mu}(q,t)c'_{\mu}(q,t)c_{\nu}(q,t)c'_{\nu}(q,t)}
\prod_{i,j\geq 1} \big(1-w q^{i-\nu_j} t^{j-\mu'_i}\big) \\
\label{Eq_drie-2}
&\quad=\sum_{\la} 
\frac{(-wqt)^{\abs{\la}} q^{n(\la')} t^{n(\la)}}
{c_{\la}(q,t)c'_{\la}(q,t)} 
\prod_{i,j\geq 1} \big(1-uq^{i-1}t^{j-\la_i'-1}\big)
\big(1-vq^{i-\la_j-1}t^{j-1}\big) \\
&\quad=\frac{(u,v,wqt,uvw;q,t)_{\infty}}{(uwq,vwt;q,t)_{\infty}}.
\label{Eq_drie-3}
\end{align}
\end{subequations}
\end{proposition}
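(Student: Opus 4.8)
The plan is to establish the two equalities \eqref{Eq_drie-1}$=$\eqref{Eq_drie-3} and \eqref{Eq_drie-2}$=$\eqref{Eq_drie-3} separately, each by reducing to the plethystic and Cauchy-type tools assembled in Section~2. The common starting point is a rewriting of the hook part of the summands. From the principal specialisation formula \eqref{Eq_MacPS} in the limit $n\to\infty$ together with \eqref{Eq_cdual} one gets
\[
\frac{q^{n(\la')}t^{n(\la)}}{c_\la(q,t)\,c'_\la(q,t)}=P_\la(t^\rho;q,t)\,P_{\la'}(q^\rho;t,q),
\]
and from \eqref{Eq_MacPS} by polynomial interpolation in $t^n$ one obtains
\[
P_\la\Big(\Big[\tfrac{1-z}{1-t}\Big];q,t\Big)=\frac{t^{n(\la)}(z;q,t)_\la}{c_\la(q,t)},
\]
which serves to absorb the various $(z;q,t)_\la$-type products into Macdonald polynomials.

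For \eqref{Eq_drie-2}$=$\eqref{Eq_drie-3} I would proceed as follows. By \eqref{Eq_qfac} the two products over $i,j$ in \eqref{Eq_drie-2} equal $(u/t;q,t)_\la\,(u;q,t)_\infty$ and $(v/q;t,q)_{\la'}\,(v;q,t)_\infty$. Feeding these, the hook identity above, and the homogeneity of $P_\la$ into \eqref{Eq_drie-2}, the overall power of $q$ and $t$ cancels and the sum collapses to
\[
(u;q,t)_\infty(v;q,t)_\infty\sum_\la(-w)^{\abs\la}
P_\la\Big(\Big[\tfrac{t-u}{1-t}\Big];q,t\Big)\,P_{\la'}\Big(\Big[\tfrac{q-v}{1-q}\Big];t,q\Big).
\]
The $\la$-sum is now evaluated by the dual Cauchy identity \eqref{Eq_dual-Cauchy} (with parameters $q,t$ in the first factor and $t,q$ in the second), yielding $(u;q,t)_\infty(v;q,t)_\infty\prod_{i,j\geq 1}(1-wX_iY_j)$ with $X=[\tfrac{t-u}{1-t}]$ and $Y=[\tfrac{q-v}{1-q}]$. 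Expanding this product and the factors $(u;q,t)_\infty,(v;q,t)_\infty$ as plethystic exponentials and comparing with \eqref{Eq_drie-3} reduces the claim to the elementary identity $u+v+wqt+uvw-uwq-vwt=u+v+w(t-u)(q-v)$ in the numerator of the exponent. This direction should be essentially routine once the normalisations are tracked.

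The harder equality is \eqref{Eq_drie-1}$=$\eqref{Eq_drie-3}. Here I would first use the factorisation $q^{i-\nu_j}t^{j-\mu'_i}=(q^it^{-\mu'_i})(q^{-\nu_j}t^j)$ together with \eqref{Eq_dual-Cauchy} to rewrite the coupling product, thereby introducing a fourth partition $\la$ and decoupling the $\mu$- and $\nu$-summations: \eqref{Eq_drie-1} becomes $\sum_\la(-w)^{\abs\la}$ times a product of two sums of the shape $\sum_\mu(-u)^{\abs\mu}P_\mu(t^\rho;q,t)P_{\mu'}(q^\rho;t,q)P_\la\big((q^it^{-\mu'_i})_{i\geq 1};q,t\big)$, the second obtained from the first by $(\mu,u)\mapsto(\nu,v)$ and $\la\mapsto\la'$. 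The crux is to evaluate these inner sums: one needs a ``transposed'' binomial theorem resumming $P_\la\big((q^it^{-\mu'_i})_{i\geq 1};q,t\big)$ against the weight $P_\mu(t^\rho;q,t)P_{\mu'}(q^\rho;t,q)$, which I expect to follow from Proposition~\ref{Prop_QQPP} (or its finite version \eqref{Eq_prop-finite}) and the Kaneko--Macdonald binomial theorem \eqref{Eq_KM}; the surviving $\la$-sum should then reduce to the four-fold Cauchy-like identity \eqref{Eq_4fold-b} for a suitable choice of $(a,b,c,d)$, and hence to \eqref{Eq_drie-3}. An alternative route, parallel to the treatment of $f_{n,m}$ in Section~\ref{Sec_fmn}, is to exhibit a single rational function $g_{n,m}(u,v,w;q,t)$, a two-parameter analogue of \eqref{Eq_fdef} with the summation partitions confined to $(m^n)$, whose evaluation by \eqref{Eq_dual-Cauchy} applied to one partition sum gives the $(m^n)$-truncation of \eqref{Eq_drie-2}, while its evaluation via the Macdonald--Koornwinder duality \eqref{Eq_Tom} followed by \eqref{Eq_dual-Cauchy} gives the truncation of \eqref{Eq_drie-1}; letting $n,m\to\infty$ then gives \eqref{Eq_drie-1}$=$\eqref{Eq_drie-2}. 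In either route the main obstacle is the same, namely keeping track of all the shifts $q^{\pm1},t^{\pm1}$ in the inner sums and matching them to \eqref{Eq_4fold-b} (respectively, guessing the correct prefactors in $g_{n,m}$); this is the step I anticipate will require the most care.
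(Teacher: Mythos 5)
Your proposal is, in substance, the paper's own proof, and both halves are workable. For \eqref{Eq_drie-2}$=$\eqref{Eq_drie-3} your computation is correct: by \eqref{Eq_qfac} the two products contribute $(u/t;q,t)_{\la}(u;q,t)_{\infty}$ and $(v/q;t,q)_{\la'}(v;q,t)_{\infty}$, the hook weight becomes $(qt)^{-\abs{\la}}P_{\la}\big(\big[\tfrac{t-u}{1-t}\big];q,t\big)P_{\la'}\big(\big[\tfrac{q-v}{1-q}\big];t,q\big)$, and the $\la$-sum is evaluated by the dual Cauchy identity \eqref{Eq_dual-Cauchy} with plethystic alphabets, the exponential bookkeeping $w(t-u)(q-v)=wqt-vwt-uwq+uvw$ giving \eqref{Eq_drie-3}. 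The paper packages this same content slightly differently: it restates \eqref{Eq_drie-2} as $\sum_{\la}(-wqt)^{\abs{\la}}q^{n(\la')}t^{n(\la)}(u/t;q,t)_{\la}(v/q;t,q)_{\la'}/c_{\la}(q,t)c'_{\la}(q,t)$ and reduces it, via $(u,v,w)\mapsto(u/t,q/v,vwt)$, to the flop identity \eqref{Eq_same}, which is proved by the ordinary Cauchy identity \eqref{Eq_Cauchy-noskew} under a plethystic specialisation; your dual-Cauchy version is the $\omega_{q,t}$-image of that argument. For the harder equality the route that works is your second (``master function'') one, and it is exactly the paper's mechanism, carried out directly on infinite alphabets rather than on an $(m^n)$-truncation: rewrite the weights by \eqref{Eq_PSlim} (after first relabelling $\mu\mapsto\mu'$, so that the specialisations that arise are of the form $q^{-\nu}t^{\rho}$ and $q^{\rho}t^{-\mu}$ and hence compatible with \eqref{Eq_Tom} -- with your assignment $x_i=q^it^{-\mu'_i}$ the $(q,t)$-polynomial $P_{\la}$ carries a $t$-perturbed $q$-alphabet, which \eqref{Eq_Tom} does not directly handle), decouple $\mu$ and $\nu$ by \eqref{Eq_dual-Cauchy} with $T=-wqt$, apply the Macdonald--Koornwinder duality twice, and resum over $\mu$ and $\nu$. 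One correction to your first route: the resummation needed at that point is not Proposition~\ref{Prop_QQPP} but the $q,t$-analogue of Euler's $q$-exponential sum, i.e.\ the limit $\sum_{\la}(-1)^{\abs{\la}}q^{n(\la')}P_{\la}(x;q,t)/c'_{\la}(q,t)=\prod_{i\geq 1}(x_i;q)_{\infty}$ of \eqref{Eq_KM} (equivalently, \eqref{Eq_dual-Cauchy} with one alphabet principally specialised), and after these inner sums the surviving $\la$-sum is literally \eqref{Eq_drie-2}; the four-fold identity \eqref{Eq_4fold-b} never enters, and the argument closes with the first half rather than with Theorem~\ref{Thm_4fold}.
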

By applying the `flop transition' to this theory, see \cite[p.~450]{IKS10},
Iqbal \emph{et al.} also obtained the following companion identity.
\begin{proposition}\label{Prop_flop}
We have
\begin{multline}\label{Eq_flop}
\sum_{\la} \frac{w^{\abs{\la}} t^{2n(\la)}}{c_{\la}(q,t)c'_{\la}(q,t)}
\prod_{i,j\geq 1} \big(1-uq^{i-1}t^{j-\la_i'}\big)
\big(1-vq^{i-1}t^{j-\la_i'}\big) \\
=\frac{(ut,vt,uw,vw;q,t)_{\infty}}{(w,uvw;q,t)_{\infty}}.
\end{multline}
\end{proposition}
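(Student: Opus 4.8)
The plan is to derive \eqref{Eq_flop} from Proposition~\ref{Prop_IKB} by a limiting/substitution argument, mirroring the way Iqbal et al. obtained it via the flop transition. First I would recognise that the left-hand side of \eqref{Eq_flop} is the "confluent" specialisation of the double sum \eqref{Eq_drie-1} in which the two hypermultiplet masses have been moved to a single box-weight. Concretely, in \eqref{Eq_drie-1} the product $\prod_{i,j\ge 1}(1-wq^{i-\nu_j}t^{j-\mu'_i})$ couples the two partitions $\mu,\nu$; the flop amounts to decoupling them by sending one of $u,v$ (together with $w$) to a degenerate regime so that the $\mu$-sum or the $\nu$-sum collapses. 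I would track exactly which $q$-shifted factorials on the right-hand side \eqref{Eq_drie-3} survive: the factor $(u,v,wqt,uvw;q,t)_\infty/(uwq,vwt;q,t)_\infty$ should limit to $(ut,vt,uw,vw;q,t)_\infty/(w,uvw;q,t)_\infty$ under the appropriate rescaling $u\mapsto u/t$, $v\mapsto v/t$, $w\mapsto wt^2$ (or similar), and the presence of $t^{2n(\la)}$ rather than $q^{n(\la')}t^{n(\la)}$ on the sum side of \eqref{Eq_flop} is the signature of such a leg-length-only specialisation.

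The cleaner route, which I would actually carry out, is to prove \eqref{Eq_flop} directly in parallel with \eqref{Eq_drie} using the same machinery. Starting from the representation \eqref{Eq_drie-2}, I would rewrite the single partition sum using the principal specialisation formula \eqref{Eq_MacPS}, the identities \eqref{Eq_candcp}--\eqref{Eq_bdual} for the hook products, and the dual Cauchy identity \eqref{Eq_dual-Cauchy} to re-express each finite product $\prod_{i,j}(1-uq^{i-1}t^{j-\la'_i})$ as a factor of the form $(u;q,t)_\infty$ times a sum over an auxiliary partition of $P_{\cdot}$ evaluated at $q^{\la}t^{\rho}$-type arguments. This converts the left side of \eqref{Eq_flop} into a multiple sum over skew Macdonald polynomials principally specialised at $t^\rho$ and $q^\rho$, which is precisely the shape of the four-fold Cauchy-like identity in Theorem~\ref{Thm_4fold}, equation \eqref{Eq_4fold-b}. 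After the substitution $\la\mapsto\la'$ (using $b_{\la'}(q,t)b_\la(t,q)=1$) one should land exactly on \eqref{Eq_4fold-b} with a specific four-tuple $(a,b,c,d)$ — I expect something like $(a,b,c,d)$ a product of $u,v,w,t$ chosen so that $abT$, $cd$, $acT$, $bdT$ match $ut, vt, uw, vw$ up to the factor $(w,uvw;q,t)_\infty$ appearing in the denominator.

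The main obstacle, and the step I would spend the most care on, is the bookkeeping of prefactors: correctly matching the powers $w^{\abs\la}t^{2n(\la)}$ and the hook-product $c_\la(q,t)c'_\la(q,t)$ against the $b_\nu(q,t)$, $b_\tau(t,q)$ weights and the principal-specialisation denominators in \eqref{Eq_4fold-b}, while simultaneously absorbing the surviving infinite products $(ut,vt,uw,vw;q,t)_\infty$ into the factored parts $(u;q,t)_\infty$, $(v;q,t)_\infty$ produced by the dual Cauchy step via \eqref{Eq_prod}. There is also a sign/degeneration subtlety: the $t^{2n(\la)}$ in \eqref{Eq_flop} (as opposed to the mixed $q^{n(\la')}t^{n(\la)}$ in \eqref{Eq_drie}) forces the two products in \eqref{Eq_flop} to be specialised at leg-colengths only, so one of the two applications of \eqref{Eq_dual-Cauchy} is in the $(q,t)$ alphabet and the other in the $(t,q)$ alphabet with a coincident specialisation — I would verify that no factor is double-counted. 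Once the prefactor matching is done, the evaluation is immediate from Theorem~\ref{Thm_4fold}; alternatively, one checks that \eqref{Eq_flop} is the $T=1$ (or a single-variable) degeneration of \eqref{Eq_drie} after a flop substitution of the masses, which I would record as a remark.
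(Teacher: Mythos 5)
Your proposal has a genuine gap: the route you say you would actually carry out does not lead where you hope, and the key matching is never performed. The right-hand side of \eqref{Eq_4fold-b} is $\frac{1}{(T;T)_{\infty}}\,\frac{(-abT,-cd;q,t,T)_{\infty}}{(acT,bdT;q,t,T)_{\infty}}$, i.e.\ a ratio of \emph{triple} products in $q,t,T$ together with the factor $1/(T;T)_{\infty}$, whereas the right-hand side of \eqref{Eq_flop} is a ratio of \emph{double} products $(\,\cdot\,;q,t)_{\infty}$ and the sum on the left carries no extra grading variable; for generic $u,v,w$ no choice of $(a,b,c,d,T)$ in \eqref{Eq_4fold-b} produces \eqref{Eq_flop}, so the expected ``specific four-tuple'' does not exist as described. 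Moreover, expanding the two products in \eqref{Eq_flop} by the dual Cauchy identity \eqref{Eq_dual-Cauchy} introduces auxiliary partitions and simply recreates a coupled multiple sum of the type \eqref{Eq_drie-1}; that is the step used (in the opposite direction) to prove the equality of \eqref{Eq_drie-1} and \eqref{Eq_drie-2}, and it moves you away from, not towards, a closed-form evaluation. Your first route is also misstated: \eqref{Eq_flop} is not a confluent limit collapsing the double sum \eqref{Eq_drie-1}; it is related to the equality of \eqref{Eq_drie-2} and \eqref{Eq_drie-3} by the exact substitution $(u,v,w)\mapsto(u/t,q/v,vwt)$ combined with $(z;t,q)_{\la'}=(-z)^{\abs{\la}}q^{-n(\la')}t^{n(\la)}(z^{-1};q,t)_{\la}$, and in the paper the logic runs the other way (Proposition~\ref{Prop_flop} is proved first and then used to finish Proposition~\ref{Prop_IKB}), so deducing \eqref{Eq_flop} from Proposition~\ref{Prop_IKB} would require an independent proof of \eqref{Eq_drie-2}$\,=\,$\eqref{Eq_drie-3}, which you do not supply.

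The argument you are missing is short and uses only the plain Cauchy identity. By \eqref{Eq_qfac}, $\prod_{i,j\geq 1}(1-zq^{i-1}t^{j-\la'_i})=(zt;q,t)_{\infty}\,(z;q,t)_{\la}$, so after cancelling $(ut,vt;q,t)_{\infty}$ the claim \eqref{Eq_flop} is equivalent to \eqref{Eq_same}. Since
\[
P_{\la}\Big(\Big[\frac{a-b}{1-t}\Big];q,t\Big)
=a^{\abs{\la}}\,\frac{t^{n(\la)}(b/a;q,t)_{\la}}{c_{\la}(q,t)}
\qquad\text{and}\qquad Q_{\la}=b_{\la}P_{\la},
\]
the left-hand side of \eqref{Eq_same} equals
$\sum_{\la} P_{\la}\big(\big[\frac{w-uw}{1-t}\big];q,t\big)\,
Q_{\la}\big(\big[\frac{1-v}{1-t}\big];q,t\big)$,
which the Cauchy identity \eqref{Eq_Cauchy-noskew}, evaluated plethystically via \eqref{Eq_prod}, sums to $(uw,vw;q,t)_{\infty}/(w,uvw;q,t)_{\infty}$. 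This is the two-line proof; none of the machinery of Theorem~\ref{Thm_4fold} is needed, and without some such concrete evaluation your proposal does not close.
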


For reasons that will become clear later, we first prove the second 
proposition.
\begin{proof}[Proof of Proposition~\ref{Prop_flop}]
By \eqref{Eq_qfac} the claim may also be stated as
\begin{equation}\label{Eq_same}
\sum_{\la} \frac{w^{\abs{\la}} t^{2n(\la)}(u,v;q,t)_{\la}}
{c_{\la}(q,t)c'_{\la}(q,t)}
=\prod_{i,j\geq 1}\frac{(uw,vw;q,t)_{\infty}}{(w,uvw;q,t)_{\infty}},
\end{equation}
where $(a_1,\dots,a_k;q,t)_{\la}:=(a_1;q,t)_{\la}\cdots (a_k;q,t)_{\la}$.
The shortest proof of this is to start with the 
Cauchy identity \eqref{Eq_Cauchy-noskew} and carry out the
plethystic substitutions $x\mapsto (w-uw)/(1-t)$ and $y\mapsto (1-v)/(1-t)$.
By \cite[p.~338]{Macdonald95}
\[
P_{\la}\Big(\Big[\frac{a-b}{1-t}\Big];q,t\Big)=a^{\abs{\la}}
\frac{t^{n(\la)}(b/a;q,t)_{\la}}
{c_{\la}(q,t)},
\]
and the simple relation $Q_{\la}=b_{\la}P_{\la}$ (see \eqref{Eq_QP-skew})
the identity \eqref{Eq_same} immediately follows.

It is in fact not hard to show that \eqref{Eq_same} and hence also 
\eqref{Eq_flop} admit a bounded analogue in which $\la$ is summed
over partitions of length at most $n$.
To this end we recall the symmetric rational function $R_{\la}(x;b;q,t)$ 
defined by the branching formula \cite{LW11}, 
\[
R_{\la}(x_1,\dots,x_n;b;q,t)=\sum_{\mu\subseteq\la}
\frac{(bx_n/t;q,t)_{\mu}}{(bx_n;q,t)_{\la}}\,
P_{\la/\mu}(x_n;q,t)
R_{\mu}(x_1,\dots,x_{n-1};b;q,t)
\]
and initial condition $R_{\la}(\text{--}\,;b;q,t)=\delta_{\la,0}$.
Note that $R_{\la}(x;0;q,t)=P_{\la}(x;q,t)$ and
$R_{(k)}(x_1;b;q,t)=x^k/(bx;q)_k$.
According to \cite[Corollary 5.4]{LW11}, the function $R_{\la}(x;b;q,t)$
admits the following $\mathfrak{sl}_n$ analogue of the classical $q$-Gauss sum:
\[
\sum_{\la} t^{n(\la)} \Big(\frac{c}{ab}\Big)^{\abs{\la}} 
\frac{(a,b;q,t)_{\la}}{c'_{\la}(q,t)} \, R_{\la}(x;c,q,t)
=\prod_{i=1}^n \frac{(cx_i/a,cx_i/b;q)_{\infty}}{(cx_i,cx_i/ab;q)_{\infty}}.
\]
Specialising $x=t^{\delta_n}$, using \cite[Proposition 4.4]{LW11}
\[
R_{\la}(t^{\delta_n};b;q,t)=\frac{t^{n(\la)} (t^n;q,t)_{\la}}
{(bt^{n-1};q,t)_{\la} c_{\la}(q,t)},
\]
and finally replacing $(a,b,c)\mapsto (u,v,uvw)$, yields
\[
\sum_{\la} \frac{w^{\abs{\la}} t^{2n(\la)} (t^n,u,v;q,t)_{\la}}
{(uvwt^{n-1};q,t)_{\la} \, c_{\la}(q,t)c'_{\la}(q,t)} 
=\prod_{i=1}^n \frac{(uwt^{i-1},vwt^{i-1};q)_{\infty}}
{(wt^{i-1},uvwt^{i-1};q)_{\infty}},
\]
where we note that $(t^n;q,t)_{\la}=0$ unless $l(\la)\leq n$.
In the large-$n$ limit this gives \eqref{Eq_same}.
\end{proof}

\begin{proof}[Proof of Proposition~\ref{Prop_IKB}]
In the following we denote the double sum in \eqref{Eq_drie-1} by
\textrm{LHS}. Replacing $\mu\mapsto\mu'$ and using 
\eqref{Eq_cdual} as well as 
\begin{equation}\label{Eq_PSlim}
P_{\la}(t^{\rho};q,t)=\frac{t^{n(\la)}}{c_{\la}(q,t)}
\end{equation}
(this is the large-$n$ limit of \eqref{Eq_MacPS}), we get
\[
\textrm{LHS}=
\sum_{\mu,\nu} \frac{(-u)^{\abs{\mu}}(-v)^{\abs{\nu}}
q^{n(\nu')} t^{n(\mu')} P_{\mu}(q^{\rho};t,q)P_{\nu}(t^{\rho};q,t)}
{c'_{\mu}(t,q)c'_{\nu}(q,t)}
\prod_{i,j\geq 1} \big(1-w q^{i-\nu_j} t^{j-\mu_i}\big).
\]
In order to decouple the sums over $\mu$ and $\nu$ we apply the
dual Cauchy identity \eqref{Eq_dual-Cauchy}
with $(x,y,T)\mapsto (q^{-\nu}t^{\rho},q^{\rho}t^{-\mu},-wqt)$.
Then
\begin{multline*}
\textrm{LHS}=
\sum_{\la,\mu,\nu} (-wqt)^{\abs{\la}}(-u)^{\abs{\mu}}(-v)^{\abs{\nu}}
q^{n(\nu')} t^{n(\mu')} \\
\times \frac{P_{\mu}(q^{\rho};t,q)P_{\la'}(q^{\rho}t^{-\mu};t,q)
P_{\nu}(t^{\rho};q,t)P_{\la}(q^{-\nu}t^{\rho};q,t)}
{c'_{\mu}(t,q)c'_{\nu}(q,t)}.
\end{multline*}
By a double application of the Macdonald--Koornwinder duality \eqref{Eq_Tom}
(with $\rho_n\mapsto \rho$) this can be transformed into
\begin{multline*}
\textrm{LHS}=
\sum_{\la,\mu,\nu} (-wqt)^{\abs{\la}}(-u)^{\abs{\mu}}(-v)^{\abs{\nu}}
q^{n(\nu')} t^{n(\mu')} \\
\times \frac{P_{\la'}(q^{\rho};t,q)P_{\mu}(q^{\rho}t^{-\la'};t,q)
P_{\la}(t^{\rho};q,t)P_{\nu}(q^{-\la}t^{\rho};q,t)}
{c'_{\mu}(t,q)c'_{\nu}(q,t)}.
\end{multline*}
Specialising $T=-1$ and $y=q^{\rho}$ in \eqref{Eq_dual-Cauchy}
and using \eqref{Eq_PSlim}, we obtain the following
$q,t$-analogue of Euler's $q$-exponential sum 
(see also \cite[p.~294]{Lassalle98}):
\[
\sum_{\la} \frac{(-1)^{\la} q^{n(\la')} P_{\la}(x;q,t)}{c'_{\la}(q,t)}
=\prod_{i\geq 1} (x_i;q)_{\infty}.
\]
This can be used to carry out the sums over $\mu$ and $\nu$, 
resulting in
\begin{align*}
\textrm{LHS}&=\sum_{\la} (-wqt)^{\abs{\la}}
P_{\la}(t^{\rho};q,t) P_{\la'}(q^{\rho};t,q) 
\prod_{i\geq 1} (uq^{i-1}t^{-\la'_i};t)_{\infty} 
(vq^{-\la_i}t^{i-1};q)_{\infty} \\
&=\sum_{\la} \frac{(-wqt)^{\abs{\la}} q^{n(\la')} t^{n(\la)}}
{c_{\la}(q,t) c'_{\la}(q,t)} 
\prod_{i,j\geq 1} \big(1-uq^{i-1}t^{j-\la'_i-1}\big)
\big(1-vq^{i-\la_j-1}t^{j-1}\big),
\end{align*}
where in the second step we have once again used \eqref{Eq_PSlim} 
followed by \eqref{Eq_cdual}. 
This proves the equality between \eqref{Eq_drie-1} and \eqref{Eq_drie-2}.
In fact, the entire proof is now done since the equality of
\eqref{Eq_drie-2} and \eqref{Eq_drie-3} is equivalent to the identity
\eqref{Eq_flop} arising from the flop transition.
Indeed, by \eqref{Eq_qfac} the second half of Proposition~\ref{Prop_IKB}
can also be stated as
\[
\sum_{\la} 
\frac{(-wqt)^{\abs{\la}} q^{n(\la')} t^{n(\la)}(u/t;q,t)_{\la}(v/q;t,q)_{\la'}}
{c_{\la}(q,t)c'_{\la}(q,t)} 
=\frac{(wqt,uvw;q,t)_{\infty}}{(uwq,vwt;q,t)_{\infty}}.
\]
Since
\[
(z;t,q)_{\la'}=(-z)^{\abs{\la}} q^{-n(\la')} t^{n(\la)} (z^{-1};q,t)_{\la}
\]
this is \eqref{Eq_same} in which $(u,v,w)$ has been replaced by
$(u/t,q/v,vwt)$.
\end{proof}

\appendix

\section{}

Jim Bryan suggested an alternative derivation of \eqref{Eq_qtNO} based on the
equivariant DMVV formula for the Hilbert scheme of $n$ points in the plane, 
$(\Complex^2)^{[n]}$.
This formula was first conjectured by Li, Liu and Zhou in \cite{LLZ06} and 
subsequently proved by Waelder \cite{Waelder08} as a consequence of the
equivariant MacKay correspondence.

Let $(u_1,u_2)$ be the equivariant parameters of the natural
torus action on $(\Complex^2)^{[n]}$,
and set $t_1:=\eup^{2\pi\iup u_1}$ and $t_2:=\eup^{2\pi\iup u_2}$.
Let $\Ell\big((\Complex^2)^{[n]};u,p,t_1,t_2\big)$ be the equivariant 
elliptic genus of $(\Complex^2)^{[n]}$, where
$p:=\exp(2\pi\iup\tau)$ and $u:=\exp(2\pi\iup z)$ for $\tau\in\mathbb{H}$ and
$z\in\Complex$. Treating $u,p,t_1$ and $t_2$ as formal variables, the 
equivariant DMVV formula \cite[Theorem 12]{Waelder08} expresses the generating
function for the elliptic genera as a product: 
\begin{multline}\label{Eq_Waelder}
\sum_{n\geq 0} T^n \Ell\big((\Complex^2)^{[n]};u,p,t_1,t_2\big) \\
=\prod_{m\geq 0}\prod_{k\geq 1} \prod_{\ell,n_1,n_2\in\Z}
\frac{1}{(1-p^m T^k u^{\ell} t_1^{n_1} t_2^{n_2})^{c(km,\ell,n_1,n_2)}}.
\end{multline}
The integers $c(m,\ell,n_1,n_2)$ on the right are determined by the 
equivariant elliptic genus of $\Complex^2$, given by a simple ratio of 
Jacobi theta functions:
\begin{align}\label{Eq_c-def}
\Ell(\Complex^2,u,p,t_1,t_2)&=
\frac{\theta(ut_1^{-1},u^{-1}t_2;p)}{\theta(t_1^{-1},t_2;p)} \\
&=\sum_{m\geq 0}\, \sum_{\ell,n_1,n_2\in\Z} c(m,\ell,n_1,n_2)
p^m u^{\ell} t_1^{n_1} t_2^{n_2}, \notag
\end{align}
where
\[
\theta(u;p):=\sum_{k\in\Z} (-u)^k p^{\binom{k}{2}}=(u,p/u,p;p)_{\infty}
\]
and
\[
\theta(u_1,\dots,u_k;p):=\theta(u_1;p)\cdots\theta(u_k;p).
\]

In \cite{LLZ06} an explicit formula in terms of arm and leg-lengths
is obtained for the generating function (over $n$)
of elliptic genera of the framed moduli spaces $M(r,n)$ 
of torsion-free sheaves on $\mathbb{P}^2$ of rank $r$ and second Chern 
class $n$, see \cite{NY05}. 
Since $M(1,n)$ coincides with $(\Complex^2)^{[n]}$ this implies
\cite[Equation (2.4); $\mu\mapsto\la'$]{LLZ06}
\begin{multline}\label{Eq_theta-arms-legs}
\sum_{n\geq 0} T^n \Ell\big((\Complex^2)^{[n]};u,p,t_1,t_2\big) \\
=\sum_{\la} T^{\abs{\la}} \prod_{s\in\la}
\frac{\theta(ut_1^{-a(s)-1}t_2^{l(s)},u^{-1}t_1^{-a(s)} t_2^{l(s)+1};p)}
{\theta(t_1^{-a(s)-1}t_2^{l(s)},t_1^{-a(s)} t_2^{l(s)+1};p)}.
\end{multline}

Combining \eqref{Eq_Waelder} with \eqref{Eq_theta-arms-legs} we can derive 
an elliptic analogue of the Nekrasov--Okounkov formula as follows.
Define a second set of integers $C(m,\ell,n_1,n_2)$ by
\begin{multline}\label{Eq_C-def}
\frac{(put_1^{-1},pu^{-1}t_1,put_2^{-1},pu^{-1}t_2;p)_{\infty}}
{(pt_1^{-1},pt_1,pt_2^{-1},pt_2;p)_{\infty}} \\
=\sum_{m\geq 0} \, \sum_{\ell,n_1,n_2\in\Z} C(m,\ell,n_1,n_2)
p^m u^{\ell} t_1^{n_1} t_2^{n_2}.
\end{multline}
From the invariance of the left-hand side under the substitutions
$(u,t_1,t_2)\mapsto (u,t_2,t_1)$ and
$(u,t_1,t_2)\mapsto (u^{-1},t_1^{-1},t_2^{-1})$
it follows that
\[
C(m,\ell,n_1,n_2)=C(m,\ell,n_2,n_1)=C(m,-\ell,-n_1,-n_2).
\]
By \eqref{Eq_c-def} and $\theta(u;p)=(1-u)(pu,pu^{-1};p)_{\infty}$,
\begin{align*}
\Ell&(\Complex^2,u,p,t_1,t_2) \\
&=\frac{(1-ut_1^{-1})(1-u^{-1}t_2)}{(1-t_1)(1-t_2)}\cdot
\frac{(put_1^{-1},pu^{-1}t_1,put_2^{-1},pu^{-1}t_2;p)_{\infty}}
{(pt_1^{-1},pt_1,pt_2^{-1},pt_2;p)_{\infty}} \\
&=\frac{(1-ut_1^{-1})(1-u^{-1}t_2)}{(1-t_1^{-1})(1-t_2)} 
\sum_{m\geq 0} \, \sum_{\ell,n_1,n_2\in\Z} C(m,\ell,n_1,n_2)
p^m u^{\ell} t_1^{n_1} t_2^{n_2} \\
&=
\sum_{m\geq 0} \, \sum_{\ell,n_1,n_2\in\Z} \,
\sum_{i,j\geq 1} D(m,\ell,n_1+i,n_2-j) p^m u^{\ell} t_1^{n_1} t_2^{n_2},
\end{align*}
where
\begin{multline*}
D(m,\ell,n_1,n_2):=
C(m,\ell,n_1-1,n_2+1)+C(m,\ell,n_1,n_2) \\
-C(m,\ell-1,n_1,n_2+1)-C(m,\ell+1,n_1-1,n_2).
\end{multline*}
Comparison with \eqref{Eq_Waelder} yields
\[
c(m,\ell,n_1,n_2)=\sum_{i,j\geq 1} D(m,\ell,n_1+i,n_2-j).
\]
Hence
\begin{align*}
&\prod_{m\geq 0} \, \prod_{k\geq 1} \, \prod_{\ell,n_1,n_2\in\Z}
\frac{1}{(1-p^m T^k u^{\ell} t_1^{n_1} t_2^{n_2})^{c(km,\ell,n_1,n_2)}} \\
&=\prod_{m\geq 0}\, \prod_{i,j,k\geq 1} \, \prod_{\ell,n_1,n_2\in\Z}
\frac{1}{(1-p^m T^k u^{\ell} t_1^{n_1} t_2^{n_2})^
{D(km,\ell,n_1+i,n_2-j)}} \\
&=\prod_{m\geq 0}\,\prod_{i,j,k\geq 1} \, \prod_{\ell,n_1,n_2\in\Z} 
\bigg(
\frac{(1-p^m T^k u^{\ell+1} t_1^{n_1-i} t_2^{n_2+j-1})}
{(1-p^m T^k u^{\ell} t_1^{n_1-i+1} t_2^{n_2+j-1})} \\
&\qquad\qquad\qquad\qquad\qquad\quad
\times\frac{(1-p^m T^k u^{\ell-1} t_1^{n_1-i+1} t_2^{n_2+j})}
{(1-p^m T^k u^{\ell} t_1^{n_1-i} t_2^{n_2+j})}\bigg)^{C(km,\ell,n_1,n_2)}. 
\end{align*}
Equating the right-hand sides of \eqref{Eq_Waelder} and 
\eqref{Eq_theta-arms-legs}, using the above rewriting of the former,
and finally replacing $(t_1,t_2)\mapsto(q^{-1},t)$ yields
\begin{align*}
\sum_{\la} & T^{\abs{\la}} \prod_{s\in\la}
\frac{\theta(uq^{a(s)+1}t^{l(s)},u^{-1}q^{a(s)} t^{l(s)+1};p)}
{\theta(q^{a(s)+1}t^{l(s)},q^{a(s)} t^{l(s)+1};p)} \\
&=\prod_{m\geq 0}\prod_{i,j,k\geq 1} \, \prod_{\ell,n_1,n_2\in\Z} 
\bigg(\frac{(1-p^m T^k u^{\ell+1} q^{i-n_1} t^{j+n_2-1})}
{(1-p^m T^k u^{\ell} q^{i-n_1-1} t^{j+n_2-1})} \\
&\qquad\qquad\qquad\qquad\qquad\quad
\times\frac{(1-p^m T^k u^{\ell-1} q^{i-n_1-1} t^{j+n_2})}
{(1-p^m T^k u^{\ell} q^{i-n_1} t^{j+n_2})}\bigg)^{C(km,\ell,n_1,n_2)}.
\end{align*}
Since the left-hand side of \eqref{Eq_C-def} trivialises to $1$ when
the elliptic nome $p$ tends to $0$,
\[
C(0,\ell,n_1,n_2)=\delta_{\ell,0}\delta_{n_1,0}\delta_{n_2,0}.
\]
In the $p\to 0$ limit the above result thus simplifies to \eqref{Eq_qtNO}.

\end{document}